\theoremstyle{plain}
\newtheorem{lem}{Lemma}[section]
\newtheorem{prop}[lem]{Proposition}
\newtheorem{thm}[lem]{Theorem}
\theoremstyle{definition}
\newtheorem{ex}[lem]{Example}
\newtheorem{disc}[lem]{Remark}
\newtheorem{fact}[lem]{Fact}
\newtheorem{notation}[lem]{Notation}
\newcommand{\bbz}{\mathbb{Z}}
\newcommand{\bbr}{\mathbb{R}}
\newcommand{\ve}{\varepsilon}
\renewcommand{\geq}{\geqslant}
\renewcommand{\leq}{\leqslant}
\theoremstyle{plain}
\newtheorem{theorem}[lem]{Theorem}
\newtheorem{lemma}[lem]{Lemma}
\newtheorem{corollary}[lem]{Corollary}
\theoremstyle{definition}
\newtheorem{definition}[lem]{Definition}
\newtheorem{example}[lem]{Example}
\renewcommand{\ve}[1]{\underline{#1}}
\newcommand{\monset}[1]{\left[\!\left[#1\right]\!\right]}
\newcommand{\Monset}[1]{\left[\!\!\left[#1\right]\!\!\right]}
\newcommand{\fsum}{\sum^{\text{finite}}}
\newcommand{\bbrg}{\mathbb{R}_{\geq 0}}
\newcommand{\bbrd}{\mathbb{R}_{\geq 0}^d}
\newcommand{\vep}{\varepsilon}
\newcommand{\bbrid}{\mathbb{R}_{\infty\geq 0}^d}
\newcommand{\bbri}{\mathbb{R}_{\infty\geq 0}}
\numberwithin{equation}{lem}
\begin{document}
\bibliographystyle{amsplain}

\title[Decompositions of Monomial Ideals]{Decompositions of Monomial Ideals in Real Semigroup Rings}
\author{Daniel Ingebretson}
\address{Daniel Ingebretson,
Department of Mathematics, Statistics, and Computer Science
University of Illinois at Chicago
322 Science and Engineering Offices (M/C 249)
851 S. Morgan Street
Chicago, IL 60607-7045
USA}

\email{dingeb2@uic.edu}

\author{Sean Sather-Wagstaff}
\address{Sean Sather-Wagstaff,
Department of Mathematics,
NDSU Dept \# 2750,
PO Box 6050,
Fargo, ND 58108-6050
USA}

\email{sean.sather-wagstaff@ndsu.edu}

\urladdr{http://www.ndsu.edu/pubweb/\~{}ssatherw/}

\thanks{This material was supported by North Dakota EPSCoR and 
National Science Foundation Grant EPS-0814442.
Sean Sather-Wagstaff was supported in part by a grant from the NSA}

\keywords{irreducible decompositions, monomial ideals, semigroup rings}
\subjclass[2000]{13C05, 13F20}

\begin{abstract}
Irreducible decompositions of monomial ideals in polynomial rings over a field are well-understood. In this paper, we investigate decompositions in the set  of monomial ideals in the semigroup ring $A[\mathbb{R}_{\geq 0}^d]$ where $A$ is an arbitrary commutative ring with identity. We  classify the irreducible elements of this set, which we call \emph{m-irreducible}, and we   classify the elements that admit  decompositions into finite intersections of m-irreducible ideals.
\end{abstract}
\maketitle

\section{Introduction}

Throughout this paper, let $A$ be a commutative ring with identity.

\

When $A$ is a field, the polynomial ring $P=A[X_1,\ldots,X_d]$ is noetherian, so every ideal in this ring has an irreducible decomposition.
For \emph{monomial ideals}, that is, the ideals of $P$ generated by sets of monomials, these decompositions are well understood:
the non-zero irreducible monomial ideals  are 
precisely the ideals $(X_{i_1}^{e_1},\ldots,X_{i_n}^{e_n})P$ generated by ``pure powers'' of some of the variables,
and every monomial ideal in this setting decomposes as a finite intersection of irreducible monomial ideals.
Furthermore, there are good algorithms for computing these decompositions, both by hand~\cite{gao:cidmi, heinzer:pdmiii, heinzer:pdmii, liu:apdmi, roune:saidmi} 
and by computer~\cite{Cocoa, M2, singular}.\footnote{One 
of the most interesting aspects of this theory is found in its interactions with combinatorics, including
applications to graphs and simplicial 
complexes; see, e.g.,
\cite{faridi:fisc, faridi:stscm, faridi:cmpsfmi, ha:rsfmifi, ha:mieihgbn, hochster:cmrcsc, morey:eiacp, miller:adfldms, miller:admir, miller:tcmcmi, reisner:cmqpr, stanley:ubccmr, villarreal:cmg}.
Foundational material on the subject can be found in the following
texts~\cite{bruns:cmr, herzog:mi, hibi:accp, miller:cca, rogers:mid, stanley:cca, villarreal:ma}.}
Note that much of this discussion extends to the case where $P$ is replaced by a numerical semigroup ring, that is, a ring of the form $A[S]$ where $S$ is a sub-semigroup of $\bbz^d$.

When $A$ is not noetherian, some of the conclusions from the previous paragraph fail because $P$ fails to be noetherian.
However, $P$ does behave somewhat ``noetherianly'' with respect to monomial ideals. 
For instance, all monomial ideals in $P$ are finitely generated, by finite sets of monomials.
The ideals that are indecomposable with respect to intersections of monomial ideals, which we call \emph{m-irreducible}, 
are  the ideals $(X_{i_1}^{e_1},\ldots,X_{i_n}^{e_n})P$.
Each monomial ideal of $P$ admits an \emph{m-irreducible decomposition},
a decomposition into a finite intersection of m-irreducible monomial ideals,
and many of the algorithms carry over to this setting; see, e.g., \cite{rogers:mid}.

In this paper, we step even further from the noetherian setting by considering 
monomial ideals in the semigroup ring $R=A[\mathbb{R}_{\geq 0}^d]$ where $A$ is an arbitrary commutative ring with identity
and $\mathbb{R}_{\geq 0}^d = \left\{ (r_1,\ldots,r_d) \in \mathbb{R}^d \mid r_1,\ldots,r_d \geq 0 \right\}$.
This ring can be thought of as the set $A[X_1^{\mathbb{R} \geq 0}, \ldots, X_d^{\mathbb{R} \geq 0}]$ 
of all polynomials in variables $X_1,\ldots,X_d$ with coefficients in $A$ where the exponents are non-negative \emph{real} numbers.
For instance, in this ring, many of the monomial ideals are not finitely generated and many do not admit finite m-irreducible decompositions; see, e.g.,
Fact~\ref{prince}\eqref{prince2} and Example~\ref{ex110803a}.
On the other hand, every ideal admits a (possibly infinite) m-irreducible decomposition, by Proposition~\ref{prop110806a}.

Our goal is  to completely characterize the monomial ideals in $R$ that admit m-irreducible decompositions.
This is accomplished in two steps. First, we characterize the m-irreducible ideals.
This is accomplished in Theorem~\ref{eq}, which we paraphrase in the following result.

\begin{thm}\label{thm110803a}
A monomial ideal in the ring $A[\mathbb{R}_{\geq 0}^d]=A[X_1^{\mathbb{R} \geq 0}, \ldots, X_d^{\mathbb{R} \geq 0}]$ 
is m-irreducible if and only if it is generated by a set of pure powers of 
the variables $X_1,\ldots,X_d$.
\end{thm}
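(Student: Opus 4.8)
The plan is to encode m-irreducibility as a lattice-theoretic condition on the complement of $I$ and then to read the pure-power generators off of that complement; this is essentially the classical argument for monomial ideals in a polynomial ring, transported to the $\bbrd$-graded setting. Write $X^{\mathbf{a}}=X_1^{a_1}\cdots X_d^{a_d}$ for $\mathbf{a}\in\bbrd$, and recall the standard facts that the monomials form an $A$-basis of $R$ and that sums and intersections of monomial ideals are monomial, with monomial basis the union, resp.\ intersection, of those of the pieces. For a proper monomial ideal $I$ set $E(I)=\{\mathbf{a}\in\bbrd: X^{\mathbf{a}}\notin I\}$; since $X^{\mathbf{a}}$ divides $X^{\mathbf{b}}$ whenever $\mathbf{a}\leq\mathbf{b}$, the set $E(I)$ is an order ideal of $\bbrd$, so it is automatically closed under the coordinatewise minimum $\wedge$. (The unit ideal and the zero ideal are degenerate boundary cases: $(0)$ is m-irreducible and is generated by the empty set of pure powers, and $R$ should be set aside or handled by convention.)

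The first step is the reducibility criterion: \emph{$I$ is m-irreducible if and only if $E(I)$ is closed under the coordinatewise maximum $\vee$.} For one direction, suppose $\mathbf{a},\mathbf{b}\in E(I)$ but $X^{\mathbf{a}\vee\mathbf{b}}\in I$; put $J=I+(X^{\mathbf{a}})$ and $K=I+(X^{\mathbf{b}})$, which properly contain $I$. A monomial $X^{\mathbf{c}}$ in the monomial ideal $J\cap K$ that is not in $I$ must be divisible by both $X^{\mathbf{a}}$ and $X^{\mathbf{b}}$, hence by $X^{\mathbf{a}\vee\mathbf{b}}$, forcing $X^{\mathbf{c}}\in I$ --- a contradiction; thus $J\cap K=I$ is a nontrivial decomposition. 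Conversely, if $I=J\cap K$ with $J,K$ monomial and each properly larger than $I$, pick monomials $X^{\mathbf{a}}\in J\ssm I$ and $X^{\mathbf{b}}\in K\ssm I$; then $X^{\mathbf{a}\vee\mathbf{b}}\in(X^{\mathbf{a}})\cap(X^{\mathbf{b}})\subseteq J\cap K=I$, so $\mathbf{a},\mathbf{b}\in E(I)$ while $\mathbf{a}\vee\mathbf{b}\notin E(I)$.

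Next comes the structure step. If $E\subseteq\bbrd$ is a nonempty order ideal closed under $\vee$, then from $\mathbf{a}=\bigvee_{i}a_i\e_i$ one gets $\mathbf{a}\in E$ if and only if $a_i\e_i\in E$ for every $i$; hence $E$ is determined by its axis traces $E_i=\{r\in\bbrg: r\e_i\in E\}$, each of which is an order ideal of $\bbrg$ containing $0$, i.e.\ an interval $[0,b_i)$ or $[0,b_i]$ with $b_i=\sup E_i\in\bbri$. Applying this to $E=E(I)$ for an m-irreducible $I$, I would take as generators, for each $i$ with $b_i<\infty$, the single pure power $X_i^{b_i}$ when $E_i=[0,b_i)$ and the whole family $\{X_i^{r}: r>b_i\}$ when $E_i=[0,b_i]$; call the resulting ideal $I'$. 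Every generator lies in $I$, so $I'\subseteq I$, and conversely a monomial $X^{\mathbf{a}}\in I$ has some coordinate with $a_i\notin E_i$, hence $a_i\geq b_i$ (with $a_i>b_i$ in the closed case), so $X^{\mathbf{a}}$ is divisible by one of the chosen generators and lies in $I'$; therefore $I=I'$ is generated by pure powers.

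For the reverse implication, let $I$ be generated by an arbitrary set of pure powers $\{X_{i_\lambda}^{e_\lambda}\}_\lambda$. Then $X^{\mathbf{a}}\in I$ exactly when $a_{i_\lambda}\geq e_\lambda$ for some $\lambda$, so ``$\mathbf{a}\in E(I)$'' is the conjunction over $\lambda$ of the one-coordinate conditions $a_{i_\lambda}<e_\lambda$; each such condition is preserved when a coordinate is replaced by the maximum of two admissible values, so $E(I)$ is closed under $\vee$ and $I$ is m-irreducible by the criterion. I expect the only genuine subtlety, beyond routine transcription of the classical proof, to be the endpoint bookkeeping in the structure step: keeping the cases $E_i=[0,b_i)$ and $E_i=[0,b_i]$ straight and verifying in the closed case that the infinite pure-power family $\{X_i^{r}:r>b_i\}$ --- which has no counterpart in the polynomial-ring theory --- genuinely recovers that variable's contribution to $I$.
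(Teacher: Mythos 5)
Your proof is correct; it reproduces the paper's lcm computation for one direction but takes a genuinely different route for the other. For ``pure powers $\Rightarrow$ m-irreducible,'' both you and the paper ((ii)$\Rightarrow$(iii) of Theorem~\ref{eq}) observe that in any decomposition $I=J\cap K$, the lcm of a monomial in $J\ssm I$ and one in $K\ssm I$ would land in $I$, which is impossible when the complement $E(I)$ is $\vee$-closed. For the converse, the paper proves (iii)$\Rightarrow$(i) by contradiction: it exhibits a monomial $\ve X^{\ve b}\in G$ not divisible by any pure power in the generating set $G$, forms $I_j=(G\cup\{X_j^{b_j}\})R$, and invokes the decomposition $I=\bigcap_{j=1}^d I_j$ of Lemma~\ref{lem110806a}. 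You instead characterize m-irreducibility by the $\vee$-closure of $E(I)$ and then prove a structure fact: a nonempty $\vee$-closed order ideal of $\bbrd$ is the product of its axis traces $E_i$, each a down-set $[0,b_i)$, $[0,b_i]$, or $[0,\infty)$, from which pure-power generators are read off directly. Your approach makes the geometry of the m-irreducible ideals visible and in effect rederives the parametrization $J_{\ve\alpha,\ve\vep}$ without a detour through contradiction, while the paper's argument is more elementary and reuses an auxiliary lemma. The endpoint analysis you flag in the closed case --- generating that coordinate's contribution by the infinite family $\{X_i^r : r>b_i\}$, which has no counterpart over a polynomial ring --- is precisely where the real-exponent setting departs from the classical one, and you handle it correctly.
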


Our characterization of the monomial ideals in $R$ that admit m-irreducible decompositions is more technical. 
However, our intuition is straightforward, and reflects the connection between the noetherian property and
existence of decompositions: a monomial ideal in $R$ admits an m-irreducible decomposition
if and only if it is almost finitely generated. 
To make sense of this, we need to explain what we mean by ``almost finitely generated''.
We build up the general definition in steps.

First, we consider the case of monomial ideals that are ``almost principal''.
In one variable (i.e., the case $d=1$) there are exactly two kinds of non-zero monomial ideals:
given a real number $a\geq 0$ set
\begin{align*}
I_{a,0}&=(X_1^r\mid r\geq a)R=(X_1^a)R\\
I_{a,1}&=(X_1^r\mid r> a)R.
\end{align*}
These ideals are completely determined by the sets of their exponents,
corresponding exactly to open and closed rays in $\bbr_{\geq 0}$. We think of these ideals as being almost generated by $X_1^a$.
This includes the ideal that \emph{is} generated by $X_1^a$ as a special case.

In two variables (i.e., the case $d=2$) there is  more variation. First, not every ideal is almost principal; in fact, we can find ideals here that are not
almost finitely generated here. Second, the almost principal ideals come in four flavors in this setting:
given  real numbers $a,b\geq 0$ set
\begin{align*}
I_{(a,b),(0,0)}&=(X_1^rX_2^s\mid \text{$r\geq a$ and $s\geq b$})R=(X_1^aX_2^b)R\\
I_{(a,b),(1,0)}&=(X_1^rX_2^s\mid \text{$r> a$ and $s\geq b$})R\\
I_{(a,b),(0,1)}&=(X_1^rX_2^s\mid \text{$r\geq a$ and $s> b$})R\\
I_{(a,b),(1,1)}&=(X_1^rX_2^s\mid \text{$r> a$ and $s> b$})R.
\end{align*}
These ideals are completely determined by the sets of their exponent vectors,
corresponding  to combinations of open and closed rays on each axis of $\bbr^2$. We think of these ideals as being almost generated by $X_1^aX_2^b$.
In general (i.e., for arbitrary $d\geq 1$) there are $2^d$ different flavors of almost principal monomial ideals,
corresponding to the different choices of open and closed rays for the exponents of each variable; see Notation~\ref{Iae}.

In general, a monomial ideal is ``almost finitely generated'' if it is a finite sum of almost principal monomial ideals.
This definition is motivated by the fact that  a finitely generated monomial
ideal is a sum of principal monomial ideals. 
For instance, each m-irreducible monomial ideal is almost finitely generated.
In these terms, our characterization of the decomposable monomial ideals, stated next, is quite straightforward; see Theorem~\ref{decomp}:

\begin{thm}\label{thm110803b}
A monomial ideal in $R$ admits an m-irreducible decomposition
if and only if it is almost finitely generated.
\end{thm}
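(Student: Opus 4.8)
The plan is to pass to the combinatorial language of exponent sets and reduce both implications to a single distributivity computation. For a monomial ideal $I$ of $R$ write $E(I)\subseteq\bbrd$ for the set of exponent vectors of the monomials in $I$; this is an upper set, and under the resulting order-theoretic dictionary one has $E(I+J)=E(I)\cup E(J)$ and $E(I\cap J)=E(I)\cap E(J)$. I would first set down two elementary bookkeeping facts, some of which are presumably already available from the development preceding Theorem~\ref{decomp}. First, the exponent set of an almost principal ideal $I_{\mathbf{a},\mathbf{e}}$ (Notation~\ref{Iae}) is a ``shifted orthant'': the set of $\mathbf{r}$ with $r_i\geq a_i$ for all $i$, the inequality in coordinate $i$ being strict exactly when $e_i=1$; moreover a finite intersection of shifted orthants is again a shifted orthant (take the coordinatewise maximum of the anchors, with the endpoint in coordinate $i$ open as soon as one of the intersecands contributing that maximum is open there), so a finite intersection of almost principal ideals is again almost principal. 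Second, grouping a set of pure powers of the variables by the variable involved and passing, for each variable, to the infimum of the exponents that occur — noting whether that infimum is attained — exhibits an ideal generated by pure powers as a sum of at most $d$ almost principal ideals; combined with Theorem~\ref{thm110803a}, this shows that a monomial ideal $I$ is m-irreducible precisely when $\bbrd\ssm E(I)$ is a box $\prod_{i=1}^d[0,b_i)$ based at the origin, with each $b_i\in[0,\infty]$ and each endpoint either present or absent.

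For the forward implication, suppose $I=J_1\cap\cdots\cap J_n$ with each $J_k$ m-irreducible. By the second fact, $J_k=\sum_j A_{kj}$ is a finite sum of almost principal ideals, hence $E(I)=\bigcap_k\bigcup_j E(A_{kj})$. Distributing the intersection over the unions — pure set theory — rewrites this as a finite union $E(I)=\bigcup_f\bigcap_k E(A_{k,f(k)})$ indexed by the functions $f$ selecting one summand $A_{k,f(k)}$ from each $J_k$, and by the first fact each $\bigcap_k E(A_{k,f(k)})$ equals $E(B_f)$ for some almost principal ideal $B_f$. Therefore $I=\sum_f B_f$ is almost finitely generated.

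For the reverse implication, suppose $I=A_1+\cdots+A_m$ with each $A_k=I_{\mathbf{a}_k,\mathbf{e}_k}$ almost principal, so that $E(I)=\bigcup_k E(A_k)$ is a finite union of shifted orthants. The complement in $\bbrd$ of a shifted orthant with anchor $\mathbf{a}_k$ is $\bigcup_{i=1}^d\{\mathbf{r}:r_i<a_{ki}\}$ (with non-strict inequality in coordinate $i$ wherever $\mathbf{e}_k$ was strict there), so $\bbrd\ssm E(I)=\bigcap_k\bigcup_i\{\mathbf{r}:r_i<a_{ki}\}$. Distributing once more expresses this as a finite union $\bigcup_f C_f$ of boxes $C_f=\bigcap_k\{\mathbf{r}:r_{f(k)}<a_{k,f(k)}\}$ based at the origin, indexed by the functions $f$ selecting one coordinate for each $k$ (the side of $C_f$ in coordinate $i$ has length $\min_{k\in f^{-1}(i)}a_{ki}$, or is all of $\bbrg$ when $f^{-1}(i)=\emptyset$, with endpoints dictated by the $\mathbf{e}_k$). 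Complementing a final time gives $E(I)=\bigcap_f(\bbrd\ssm C_f)$, and by the second fact each $\bbrd\ssm C_f$ is the exponent set of an m-irreducible ideal $K_f$ (an $f$ with $C_f=\emptyset$ yields $K_f=R$ and may be dropped). Hence $I=\bigcap_f K_f$ is an m-irreducible decomposition.

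The conceptual load is light: both directions are the same de Morgan/distributivity manoeuvre, and the only substantive input is Theorem~\ref{thm110803a}. The real work — and the only delicate point — is the open/closed bookkeeping recorded by the vectors $\mathbf{e}\in\{0,1\}^d$: in particular, an infinite family of pure-power generators in a single variable may have a non-attained infimum, which is precisely why the strict flavours ($\mathbf{e}\neq\mathbf{0}$) are forced in the second bookkeeping fact and why the theorem must be phrased in terms of almost principal, rather than merely principal, ideals. The degenerate cases $I=0$ and $I=R$ are disposed of directly, according to the conventions in force for the empty sum and the empty intersection.
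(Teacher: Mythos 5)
Your proof is correct and follows essentially the same route as the paper's: decompose each m-irreducible (resp.\ almost principal) ideal into single-coordinate pieces, distribute intersections over unions of exponent sets (resp.\ complement, distribute, complement), and recompress each resulting intersection (resp.\ union) of orthants. Your exponent-set distributivity is exactly the paper's Lemma~\ref{comm} (itself proved by passing to monomial sets), the decompositions of m-irreducibles and of $I_{\ve\alpha,\ve\varepsilon}$ into single-variable pieces are Fact~\ref{Jaedecomp} and Lemma~\ref{Idecomp}, and the recompression of intersections and sums of those pieces is Lemmas~\ref{intk} and~\ref{sumk}.
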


As to the organization of this paper, Section~\ref{sec110803a} consists of definitions and background results,
and Sections~\ref{sec110803b} and~\ref{sec110806a} are primarily concerned with the proofs of Theorems~\ref{thm110803a}
and~\ref{thm110803b}, respectively.

\section{Background and Preliminary Results}\label{sec110803a}

In this section, we lay the foundation for the proofs of our main results.
We begin  by establishing some notation for use throughout the paper.

\begin{definition}\label{R} 
Set
\begin{align*}
\bbrg&=\{r\in\bbr\mid r\geq 0\}
&\bbri&=\bbrg\cup\{\infty\}.
\end{align*}
Let $d$ be a non-negative integer, and set 
\begin{align*}
\mathbb{R}_{\geq 0}^d &=(\bbrg)^d= \left\{ (r_1,\ldots,r_d) \in \mathbb{R}^d \mid r_1,\ldots,r_d \geq 0 \right\}
\end{align*}
which is an additive semigroup, and
\begin{align*}
\bbrid &=(\bbri)^d= \left\{ (r_1,\ldots,r_d)  \mid r_1,\ldots,r_d \in\bbri \right\}
\end{align*}
We consider the semigroup ring
$$R=A[\mathbb{R}_{\geq 0}^d]$$
which we think of as the set 
of all polynomials in variables $X_1,\ldots,X_d$ with coefficients in $A$ where the exponents are non-negative real numbers.
Moreover, for $i=1,\ldots,d$ we set
$$ X_i^{\infty} = 0. $$
When the number of variables is small ($d\leq 2$) we will use variables $X,Y$ in place of $X_1,X_2$.
A \textit{monomial} in $R$ is an element of the form 
$$\ve{X}^{\ve{r}} = X_1^{r_1} \cdots X_d^{r_d} \in R$$ 
where 
$ \ve{r} = \left(r_1, \ldots, r_d\right) \in \mathbb{R}_{\geq 0}^d $ is the \textit{exponent vector} of the monomial $f$.
Multiplication of monomials in this ring is defined analogously to its classical counterpart: For all $ \ve{q}, \ve{r} \in \mathbb{R}_{\geq 0}^d $, and all $ s \in \mathbb{R} $, we write
$$ \ve{X}^{\ve{q}} \ve{X}^{\ve{r}} = \ve{X}^{\ve{q}+\ve{r}} \qquad \left( \ve{X}^{\ve{q}} \right)^{s} = \ve{X}^{\left(s \ve{q} \right)}. $$
An arbitrary element of $R$ is a  linear combination of monomials
$$ f = \fsum_{\ve{r} \in \mathbb{R}_{\geq 0}^d} a_{\ve{r}} \ve{X}^{\ve{r}}$$
with coefficients $a_{\ve r}\in A$.
A \emph{monomial ideal} of $R$ is an ideal $I=(S)R$ generated by a set $S$ of monomials in $R$.
Given a subset $ G \subseteq R $, the \textit{monomial set}  of $G$ is
$$ \monset{G} =\{\text{monomials of $R$ in $G$}\}\subseteq G. $$
\end{definition}

We next list some basic properties of monomial ideals.

\begin{fact}\label{monfact}
Let $I$ and $J$ be monomial ideals of $R$.
\begin{enumerate}[(a)]
\item\label{monfact1}
For any subset $ G \subseteq R $, we have  $ \monset{G} = G \cap \monset{R} $, by definition. 
\item\label{monfact2}
The monomial ideal $I$ is generated by its monomial set: $ I = \left( \monset{I} \right)R $.
\item\label{monfact3}
Parts~\eqref{monfact1} and~\eqref{monfact2} combine to show that $ I \subseteq J $ if and only if $ \monset{I} \subseteq \monset{J} $, and hence $ I = J $ if and only if $ \monset{I} = \monset{J} $.
\end{enumerate}
\end{fact}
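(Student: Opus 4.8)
The plan is to treat the three parts in order, exploiting the fact that each is essentially a bookkeeping exercise with the definitions in Definition~\ref{R}: part~\eqref{monfact1} is immediate from the definition of $\monset{-}$, part~\eqref{monfact2} builds on it, and part~\eqref{monfact3} combines the two.

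For~\eqref{monfact1}, I would just unwind notation: by definition $\monset{G}$ is the set of monomials of $R$ lying in $G$, and $\monset{R}$ is the set of all monomials of $R$, so $\monset{G}=G\cap\monset{R}$. For~\eqref{monfact2}, since $I$ is a monomial ideal we may write $I=(S)R$ for some set $S$ of monomials of $R$. Each element of $S$ is a monomial lying in $(S)R=I$, so $S\subseteq\monset{I}$ and hence $I=(S)R\subseteq(\monset{I})R$. Conversely, $\monset{I}\subseteq I$ by~\eqref{monfact1}, and $I$ is an ideal, so $(\monset{I})R\subseteq I$; the two containments give $I=(\monset{I})R$.

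For~\eqref{monfact3}, I would first establish the equivalence $I\subseteq J\iff\monset{I}\subseteq\monset{J}$. If $I\subseteq J$ then $\monset{I}=I\cap\monset{R}\subseteq J\cap\monset{R}=\monset{J}$ by~\eqref{monfact1}; and if $\monset{I}\subseteq\monset{J}$ then $I=(\monset{I})R\subseteq(\monset{J})R=J$ by~\eqref{monfact2}. Running this equivalence in both directions and invoking antisymmetry of inclusion then yields $I=J\iff\monset{I}=\monset{J}$. There is no genuine obstacle anywhere in this argument; the single point deserving a moment's thought is the inclusion $S\subseteq\monset{I}$ used in~\eqref{monfact2}, namely that the monomial generators one chooses for a monomial ideal are indeed among the ideal's monomials, which follows from $S\subseteq(S)R=I$.
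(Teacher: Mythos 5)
Your proof is correct and follows exactly the route the paper indicates but leaves implicit (the Fact is stated without proof, with part (a) declared "by definition" and part (c) declared a combination of (a) and (b)). The one step worth isolating — that a monomial generating set $S$ satisfies $S\subseteq\monset{I}$ because $S\subseteq(S)R=I$ — is correctly identified and handled.
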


The next facts follow from the $\mathbb{R}_{\geq 0}^d$-graded structure on $R$, that is, the isomorphisms 
$R\cong \oplus_{\mathbb{R}_{\geq 0}^d}A\ve X^{\ve r}\cong \oplus_{f\in\monset R}Af$.

\begin{fact}\label{fact110804a}
Let $\{I_{\lambda}\}_{\lambda\in\Lambda}$ be a set of monomial ideals of $R$.
\begin{enumerate}[(a)]
\item\label{fact110804a1}
Given monomials $ f = \ve{X}^{\ve{r}} $ and $ g = \ve{X}^{\ve{s}} $ in $R$, we have
$f| g$ if and only if $g\in(f)R$ if and only if $ s_i \geq r_i $ for all $ i $.
When these conditions are satisfied, we have $g=fh$ where $h=\ve X^{\ve s-\ve r}$.
\item\label{fact110804a2}
Given a monomial $ f \in\monset R$ and a subset $S\subseteq\monset R$, 
we have $ f \in (S)R $ if and only if $ f \in (s)R $ for some $s\in S$.
\item\label{fact110804a3}
The sum $\sum_{\lambda\in\Lambda}I_{\lambda}$ is a monomial ideal such that
$\Monset{\sum_{\lambda\in\Lambda}I_{\lambda}}=\bigcup_{\lambda\in\Lambda}\monset{I_{\lambda}}$.
\item\label{fact110804a4}
The intersection $\bigcap_{\lambda\in\Lambda}I_{\lambda}$ is a monomial ideal such that
$\Monset{\bigcap_{\lambda\in\Lambda}I_{\lambda}}=\bigcap_{\lambda\in\Lambda}\monset{I_{\lambda}}$.
\end{enumerate}
\end{fact}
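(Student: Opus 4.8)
The plan is to reduce every part to the $\bbrd$-graded structure on $R$ recalled just before the statement, the only real work being one bookkeeping claim about graded components. I would begin with part~(a), which merely unwinds definitions: the equivalence $f\mid g\iff g\in(f)R$ holds in any commutative ring, and for the remaining equivalence note that if $s_i\geq r_i$ for all $i$ then $\ve s-\ve r\in\bbrd$, so $h=\ve X^{\ve s-\ve r}$ is a monomial of $R$ with $fh=\ve X^{\ve r+(\ve s-\ve r)}=\ve X^{\ve s}=g$; conversely, writing $g=fu$ with $u=\fsum_{\ve t}a_{\ve t}\ve X^{\ve t}$ gives $g=\fsum_{\ve t}a_{\ve t}\ve X^{\ve r+\ve t}$, every exponent vector appearing lies in $\ve r+\bbrd$, and comparing with $g=\ve X^{\ve s}$ forces $\ve s\in\ve r+\bbrd$, i.e. $s_i\geq r_i$ for all $i$. (If $A$ is the zero ring all statements are vacuous, so we may assume $1\neq 0$ in $A$.)

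The heart of the matter is the following claim: \emph{if $f=\fsum_{\ve r}a_{\ve r}\ve X^{\ve r}\in(S)R$ for a set $S\subseteq\monset R$, then for every $\ve r$ with $a_{\ve r}\neq 0$ there is some $s\in S$ with $s\mid\ve X^{\ve r}$.} To prove it, write $f=\sum_{j=1}^{n}u_js_j$ as a finite sum with $s_j=\ve X^{\ve s_j}\in S$ and $u_j\in R$; collecting terms by exponent, the coefficient of $\ve X^{\ve r}$ in $\sum_j u_js_j$ is a sum of contributions from the $u_js_j$, and the monomial $\ve X^{\ve r}$ occurs in $u_js_j$ only if $\ve r\in\ve s_j+\bbrd$, i.e. only if $s_j\mid\ve X^{\ve r}$ by part~(a); since $a_{\ve r}\neq 0$ some such contribution is nonzero, whence $s_j\mid\ve X^{\ve r}$ for that $j$. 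Part~(b) is now immediate: the implication $f\in(s)R\Rightarrow f\in(S)R$ is trivial for $s\in S$, and the converse for a monomial $f$ is the claim. The claim also yields a fact I will reuse: for any monomial ideal $I$ and any $f=\fsum_{\ve r}a_{\ve r}\ve X^{\ve r}\in R$, one has $f\in I$ if and only if $\ve X^{\ve r}\in\monset I$ for all $\ve r$ with $a_{\ve r}\neq 0$ --- apply the claim with $S=\monset I$ using Fact~\ref{monfact}\eqref{monfact2} for ``only if'', while ``if'' holds because $f$ is then an $A$-linear combination of elements of $I$.

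Part~(c) follows at once: $\sum_\lambda I_\lambda=\sum_\lambda(\monset{I_\lambda})R=\bigl(\bigcup_\lambda\monset{I_\lambda}\bigr)R$ by Fact~\ref{monfact}\eqref{monfact2}, so it is a monomial ideal, and $\Monset{\sum_\lambda I_\lambda}=\bigcup_\lambda\monset{I_\lambda}$ because ``$\supseteq$'' is obvious and a monomial in $\bigl(\bigcup_\lambda\monset{I_\lambda}\bigr)R$ is by part~(b) divisible by some $s\in\monset{I_\mu}$, hence lies in $I_\mu$ and so in $\monset{I_\mu}$. For part~(d), the identity $\Monset{\bigcap_\lambda I_\lambda}=\bigcap_\lambda\monset{I_\lambda}$ is pure set theory, since by Fact~\ref{monfact}\eqref{monfact1} each side equals $\bigl(\bigcap_\lambda I_\lambda\bigr)\cap\monset R=\bigcap_\lambda(I_\lambda\cap\monset R)$; and $\bigcap_\lambda I_\lambda$ is a monomial ideal because the displayed consequence of the claim, applied to each $I_\lambda$, shows that any $f=\fsum_{\ve r}a_{\ve r}\ve X^{\ve r}\in\bigcap_\lambda I_\lambda$ has each $\ve X^{\ve r}$ (with $a_{\ve r}\neq 0$) in $\bigcap_\lambda\monset{I_\lambda}$, so $f\in\bigl(\bigcap_\lambda\monset{I_\lambda}\bigr)R$, giving $\bigcap_\lambda I_\lambda=\bigl(\bigcap_\lambda\monset{I_\lambda}\bigr)R$ (the inclusion $\supseteq$ being clear).

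I do not anticipate a serious obstacle: the single genuine computation is the graded-component bookkeeping in part~(a) and in the claim, and the one point requiring care is precisely what makes part~(d) nontrivial --- that an arbitrary, not necessarily monomial, element of $\bigcap_\lambda I_\lambda$ must be rewritten as an $A$-linear combination of monomials each of which lies in every $I_\lambda$.
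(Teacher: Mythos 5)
Your proof is correct, and it is essentially a careful spelling-out of the paper's own (unwritten) argument: the paper states this Fact with only the remark that it follows from the $\bbrd$-graded structure $R\cong\bigoplus_{f\in\monset R}Af$, and your ``heart of the matter'' claim is exactly the graded-component bookkeeping that that remark alludes to, with parts (a)--(d) then read off from it just as you do. The only quibble is cosmetic: when $A$ is the zero ring the biconditional in (a) is actually false rather than ``vacuous,'' so the right disclaimer is simply that one tacitly assumes $1\neq 0$ throughout, as the paper implicitly does.
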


Given a set $\{S_{\lambda}\}_{\lambda\in\Lambda}$ of subsets of $R$, the  equality 
$\sum_{\lambda\in\Lambda}(S_{\lambda})R=(\bigcup_{\lambda\in\Lambda}S_{\lambda})R$ is standard.
In general, one does not have such a nice description for the intersection $\bigcap_{\lambda\in\Lambda}(S_{\lambda})R$.
However, for monomial ideals in our ring $R$, the next result provides such a description.
In a sense, it says that the monomial ideals of $R$ behave like ideals in a unique factorization domain.
First, we need a definition.

\begin{definition}\label{lcm}
Let $ \ve{X}^{\ve{r_1}}, \ldots, \ve{X}^{\ve{r_k}} \in\monset R$  with $\ve{r_i}=(r_{i,1},\ldots,r_{i,d})\in\bbrd$. 
We define the \textit{least common multiple} of these monomials as $$ \underset{1 \leq i \leq k}{\operatorname{lcm}} ( \ve{X}^{\ve{r_k}} )=\ve{X}^{\ve{p}}  $$
where $ \ve{p} $ is defined componentwise by $ p_j = \underset{1 \leq i \leq k}{\max} \left\{ r_{i,j} \right\} $.
\end{definition}

\begin{lem}\label{int}
Given subsets  $ S_1,\ldots,S_k \subseteq \monset R$,  we have 
$$\textstyle \bigcap_{i=1}^k \left( S_i \right)R = \left( \underset{1 \leq i \leq k}{\textup{lcm}} (f_i) \mid\text{
$f_i \in S_i$ for $i=1,\ldots,k$} \right)R. $$
\end{lem}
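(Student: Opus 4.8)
The plan is to prove the two inclusions separately, using Fact~\ref{fact110804a} to reduce everything to a statement about monomial sets. Write $J = \left( \operatorname{lcm}_{1\leq i\leq k}(f_i) \mid f_i \in S_i \text{ for all } i\right)R$ for the ideal on the right-hand side. Since $J$ and $\bigcap_{i=1}^k (S_i)R$ are both monomial ideals (the latter by Fact~\ref{fact110804a}\eqref{fact110804a4}), by Fact~\ref{monfact}\eqref{monfact3} it suffices to show $\monset{J} = \monset{\bigcap_i (S_i)R} = \bigcap_i \monset{(S_i)R}$, again using Fact~\ref{fact110804a}\eqref{fact110804a4}. So the whole proof is: a monomial $g = \ve{X}^{\ve{s}}$ lies in $J$ if and only if $g \in (S_i)R$ for every $i$.

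For the forward inclusion $J \subseteq \bigcap_i (S_i)R$: since $J$ is a monomial ideal generated by the monomials $\operatorname{lcm}(f_1,\ldots,f_k)$, by Fact~\ref{fact110804a}\eqref{fact110804a2} it is enough to check that each such generator lies in $\bigcap_i (S_i)R$; and for a fixed choice of $f_1 \in S_1, \ldots, f_k \in S_k$, the monomial $\operatorname{lcm}(f_1,\ldots,f_k)$ is divisible by $f_j$ for each $j$ (its exponent vector dominates that of $f_j$ componentwise, by Definition~\ref{lcm}), hence lies in $(f_j)R \subseteq (S_j)R$ by Fact~\ref{fact110804a}\eqref{fact110804a1}. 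Since $j$ was arbitrary, $\operatorname{lcm}(f_1,\ldots,f_k) \in \bigcap_i (S_i)R$, as needed.

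For the reverse inclusion, let $g = \ve{X}^{\ve{s}}$ be a monomial in $\bigcap_{i=1}^k (S_i)R$. For each $i$, since $g \in (S_i)R$, Fact~\ref{fact110804a}\eqref{fact110804a2} gives a monomial $f_i \in S_i$ with $f_i \mid g$; writing $f_i = \ve{X}^{\ve{r_i}}$, Fact~\ref{fact110804a}\eqref{fact110804a1} says $r_{i,j} \leq s_j$ for all $j$. Then $\operatorname{lcm}_{1\leq i\leq k}(f_i) = \ve{X}^{\ve{p}}$ with $p_j = \max_i r_{i,j} \leq s_j$ for every $j$, so $\operatorname{lcm}(f_i) \mid g$, hence $g \in (\operatorname{lcm}(f_i))R \subseteq J$. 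This proves $\bigcap_i (S_i)R \subseteq J$ and completes the argument.

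This proof is essentially routine bookkeeping with exponent vectors; I do not expect a genuine obstacle. The only point requiring a moment's care is making sure the passage between ideals and their monomial sets is justified at each step — in particular that $J$ is generated, as a monomial ideal, precisely by the displayed lcm monomials, so that membership of arbitrary monomials in $J$ can be tested via Fact~\ref{fact110804a}\eqref{fact110804a2} — but this is exactly what Fact~\ref{monfact} and Fact~\ref{fact110804a} are set up to provide. One should also note the degenerate cases (some $S_i$ empty, forcing $(S_i)R = 0$ and hence both sides zero, with the empty family of lcm's generating the zero ideal) so the statement is not vacuously mishandled.
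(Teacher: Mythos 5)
Your proof is correct and follows essentially the same route as the paper's: reduce to an equality of monomial sets via Fact~\ref{monfact}\eqref{monfact3} and Fact~\ref{fact110804a}\eqref{fact110804a4}, then handle each containment by componentwise comparison of exponent vectors using Fact~\ref{fact110804a}\eqref{fact110804a1}--\eqref{fact110804a2}. The only cosmetic difference is that you verify the containment $J\subseteq\bigcap_i(S_i)R$ by checking it on the generating monomials of $J$, whereas the paper takes an arbitrary monomial of $L$ and unwinds it; these are the same argument.
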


\begin{proof}
Let $ L = \left( \underset{1 \leq i \leq k}{\textup{lcm}} (f_i) \mid\text{
$f_i \in S_i$ for $i=1,\ldots,k$} \right)R$, which is a monomial ideal of $R$ by definition.
Fact~\ref{fact110804a}\eqref{fact110804a4} implies that $\bigcap_{i=1}^k \left( S_i \right)R$ is also a monomial ideal
with $\Monset{\bigcap_{i=1}^k \left( S_i \right)R}=\bigcap_{i=1}^k\monset{ \left( S_i \right)R}$.
Thus, to show that $\bigcap_{i=1}^k \left( S_i \right)R= L$, we need only show that $\bigcap_{i=1}^k\monset{ \left( S_i \right)R}= \monset{L}$.

For the containment $\bigcap_{i=1}^k\monset{ \left( S_i \right)R}\subseteq \monset{L}$, let 
$\ve{X}^{\ve{t}} \in \bigcap_{i=1}^k\monset{ \left( S_i \right)R}$. 
Fact~\ref{fact110804a}\eqref{fact110804a2} implies that $ \ve{X}^{\ve{t}} $ is a  multiple of one of the generators $\ve{X}^{\ve{r_i}} \in S_i$ for  $ i = 1, \ldots, k $. 
Hence we have $ t_j \geq r_{i,j} $ for all $i$ and  $j$ by Fact~\ref{fact110804a}\eqref{fact110804a1}. 
It follows that $ t_j \geq \text{max}_{1 \leq i \leq d} \{ r_{i,j} \} = p_j $, hence $ \ve{X}^{\ve{t}} \in \left( \ve{X}^{\ve{p}} \right)R \subseteq L $.

For the reverse containment $\bigcap_{i=1}^k\monset{ \left( S_i \right)R}\supseteq \monset{L}$, let 
$\ve{X}^{\ve{t}}\in\monset L $. 
Fact~\ref{fact110804a}\eqref{fact110804a2} implies that $ \ve{X}^{\ve{t}} $ is a  multiple of one of the monomial  generators of $L$,
so there exist  $ \ve{X}^{\ve{r_i}} \in S_i $ for $i=1,\ldots,k$ such that $ \ve{X}^{\ve{t}} $ is a multiple of $ \operatorname{lcm}_{1 \leq i \leq k} ( \ve{X}^{\ve{r}_i} )$. 
Since $ \operatorname{lcm}_{1 \leq i \leq k} ( \ve{X}^{\ve{r}_i} )$ is a multiple of $\ve{X}^{\ve{r}_i}$ for each $i$, we then have
$\ve{X}^{\ve{t}} \in \bigcap_{i=1}^k\left(  \ve{X}^{\ve{r}_i}  \right)R \subseteq \bigcap_{i=1}^k \left( S_i \right)R$, as desired.
\end{proof}

\begin{lem}\label{lem110806a}
Let $G\subseteq\monset R$, and set $I=(G)R$.
Let $ \ve{X}^{\ve{b}} \in \monset I $ be given, and 
for $j=1,\ldots,d$ set
$ I_j = ( G \cup \{ X_j^{b_j} \} ) R$.
Then we have $ I = \bigcap_{j=1}^d I_j $.
\end{lem}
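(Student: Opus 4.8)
The plan is to prove the equality of monomial ideals by comparing their monomial sets, using Fact~\ref{fact110804a}\eqref{fact110804a4} and~\eqref{fact110804a2}, since $I$ and each $I_j$ (hence also $\bigcap_j I_j$) are monomial ideals.

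First I would record the easy containment. For each $j$ we have $G \subseteq G \cup \{X_j^{b_j}\}$, so $I \subseteq I_j$; intersecting over $j$ gives $I \subseteq \bigcap_{j=1}^d I_j$. (This direction does not even use the hypothesis $\ve{X}^{\ve{b}} \in \monset{I}$.)

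For the reverse containment, let $\ve{X}^{\ve{t}} \in \monset{\bigcap_{j=1}^d I_j} = \bigcap_{j=1}^d \monset{I_j}$. I want to show $\ve{X}^{\ve{t}} \in \monset{I}$. Fix $j$. By Fact~\ref{fact110804a}\eqref{fact110804a2}, since $\ve{X}^{\ve{t}} \in \monset{I_j}$, it is a multiple of some generator in $G \cup \{X_j^{b_j}\}$: either $\ve{X}^{\ve{t}}$ is a multiple of some element of $G$ — in which case $\ve{X}^{\ve{t}} \in \monset{I}$ and we are done — or $\ve{X}^{\ve{t}}$ is a multiple of $X_j^{b_j}$, which by Fact~\ref{fact110804a}\eqref{fact110804a1} means $t_j \geq b_j$. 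So the key dichotomy is: for each index $j$, either $\ve{X}^{\ve{t}} \in \monset{I}$ outright, or $t_j \geq b_j$. If the first case ever occurs we are finished, so assume the second case holds for all $j$, i.e.\ $t_j \geq b_j$ for every $j = 1,\ldots,d$. Then $\ve{X}^{\ve{t}}$ is a multiple of $\ve{X}^{\ve{b}}$ by Fact~\ref{fact110804a}\eqref{fact110804a1}. But $\ve{X}^{\ve{b}} \in \monset{I}$ by hypothesis, so $\ve{X}^{\ve{t}} \in \monset{I}$ as well. In all cases $\ve{X}^{\ve{t}} \in \monset{I}$, giving $\monset{\bigcap_{j=1}^d I_j} \subseteq \monset{I}$ and hence $\bigcap_{j=1}^d I_j \subseteq I$ by Fact~\ref{monfact}\eqref{monfact3}.

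The only subtle point — the "main obstacle," though it is mild — is handling the case analysis cleanly: for a fixed monomial $\ve{X}^{\ve{t}}$ the generator it divides can depend on $j$, so one must organize the argument as "either some $j$ lands us in $G$, or every $j$ forces $t_j \geq b_j$," rather than trying to pick one generator uniformly. Once that bookkeeping is set up, the hypothesis $\ve{X}^{\ve{b}} \in \monset{I}$ does exactly the work needed to close the remaining case. Everything else is a routine application of the facts already established.
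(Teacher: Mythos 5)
Your proof is correct and follows essentially the same approach as the paper: the forward containment is immediate from $G \subseteq G \cup \{X_j^{b_j}\}$, and the reverse is done on monomial sets via Fact~\ref{fact110804a}\eqref{fact110804a2}. The only cosmetic difference is that the paper phrases the key step as a proof by contradiction (assume $\alpha \notin I$, deduce $\alpha \in (X_j^{b_j})R$ for all $j$, invoke Lemma~\ref{int} to conclude $\alpha \in (\ve X^{\ve b})R \subseteq I$), whereas you organize it as a direct case split over $j$ and use Fact~\ref{fact110804a}\eqref{fact110804a1} in place of Lemma~\ref{int} for the final step; the content is identical.
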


\begin{proof}
The containment $G\subseteq G \cup \{ X_j^{b_j}\}$ implies that $I=(G)R\subseteq(G \cup \{ X_j^{b_j}\})R=I_j$, so we have
$ I \subseteq \bigcap_{j=1}^d I_j $.
For the reverse  containment, Facts~\ref{monfact}\eqref{monfact3} and~\ref{fact110804a}\eqref{fact110804a4} imply that it suffices to show that 
$\monset I \supseteq \bigcap_{j=1}^d \monset{I_j}$.

Let $ \alpha \in \bigcap_{j=1}^d \monset{I_j} $, and suppose that $\alpha\notin I$. 
For  $ j = 1, \ldots, d $, we have $ \alpha \in I_j = ( G \cup \{ X_j^{b_j} \} ) R$. 
The condition $\alpha\notin I=(G)R$ implies that $\alpha$ is not a multiple of any element of $G$, so Fact~\ref{fact110804a}\eqref{fact110804a2} implies that
$\alpha\in(X_j^{b_j})R$ for $j=1,\ldots,d$.
In other words, we have 
$\alpha\in\bigcap_{j=1}^d(X_j^{b_j})R=(\ve X^{\ve b})R\subseteq I$
by Lemma~\ref{int}.
This contradiction establishes the lemma.
\end{proof}

The last result of this section describes the interaction between sums and intersections of monomial ideals in $R$.

\begin{lem}\label{comm}
For $t=1,\ldots,l$, let $ \{ K_{t,i_t} \}_{i_t = 1}^{m_t} $ be a collection of monomial ideals. Then the following equalities hold: 
\begin{align} 
\label{comm1}\tag{a}
\bigcap_{t=1}^l \sum_{i_t =1}^{m_t} K_{t,i_t} &= \sum_{i_1 =1}^{m_1} \sum_{i_2 = 1}^{m_2} \cdots \sum_{i_l = 1}^{m_l} \bigcap_{t=1}^l K_{t,i_t} \\
\label{comm2}\tag{b}
\sum_{t=1}^l \bigcap_{i_t =1}^{m_t} K_{t,i_t} &= \bigcap_{i_1 =1}^{m_1} \bigcap_{i_2 = 1}^{m_2} \cdots \bigcap_{i_l = 1}^{m_l} \sum_{t=1}^l K_{t,i_t}
\end{align}
\end{lem}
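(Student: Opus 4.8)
The plan is to reduce both identities to statements about monomial sets, using Facts~\ref{fact110804a}\eqref{fact110804a3} and~\eqref{fact110804a4}, which describe $\monset{-}$ of sums and intersections in terms of unions and intersections of the individual monomial sets. Once translated this way, identity~\eqref{comm1} becomes the claim that
\[
\bigcap_{t=1}^l \bigcup_{i_t=1}^{m_t} \monset{K_{t,i_t}}
= \bigcup_{i_1=1}^{m_1}\cdots\bigcup_{i_l=1}^{m_l} \bigcap_{t=1}^l \monset{K_{t,i_t}},
\]
which is just the elementary distributive law for finite unions and intersections of sets; similarly~\eqref{comm2} becomes the dual distributive law for finite intersections over unions. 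So the real content is purely set-theoretic bookkeeping, and I would prove it by a direct two-way element chase.

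For~\eqref{comm1}: to verify the nontrivial containment $\subseteq$, take a monomial $f$ in the left-hand side, so for each $t$ there is an index $i_t$ with $f\in\monset{K_{t,i_t}}$; this choice function produces a tuple $(i_1,\dots,i_l)$ with $f\in\bigcap_{t=1}^l\monset{K_{t,i_t}}$, placing $f$ in the right-hand side. The reverse containment is immediate since each $\bigcap_t\monset{K_{t,i_t}}$ sits inside $\bigcup_{i_t}\monset{K_{t,i_t}}$ for every $t$. The argument for~\eqref{comm2} is the mirror image: a monomial $f$ in $\sum_{t=1}^l\bigcap_{i_t=1}^{m_t}K_{t,i_t}$ has $\monset{-}$ equal to $\bigcup_{t=1}^l\bigcap_{i_t=1}^{m_t}\monset{K_{t,i_t}}$, and one checks this coincides with $\bigcap_{i_1}\cdots\bigcap_{i_l}\bigcup_{t=1}^l\monset{K_{t,i_t}}$ by the same kind of element chase, now choosing, for a monomial $f$ in the right-hand side that fails to lie in any single $\bigcap_{i_t}\monset{K_{t,i_t}}$, an index $i_t$ for each $t$ witnessing $f\notin\monset{K_{t,i_t}}$, and deriving a contradiction with membership in $\bigcup_t\monset{K_{t,i_t}}$ for that tuple. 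After the set-level identities are established, Fact~\ref{monfact}\eqref{monfact3} upgrades the equalities of monomial sets to equalities of ideals.

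The only mild subtlety — and what I would flag as the main point to get right — is the interchange of the nested finite sums/intersections with the "for each $t$ choose $i_t$" step: one must be careful that the right-hand sides genuinely range over \emph{all} tuples $(i_1,\dots,i_l)\in\prod_t\{1,\dots,m_t\}$, so that the choice function from the left-hand side always lands in one of the summands (resp. intersectands). Since everything is finite there is no set-theoretic difficulty, but the notation with $l$ nested operators is where an argument could go astray. A clean way to present it is to induct on $l$, handling $l=2$ as the base case (ordinary distributivity of $\cap$ over $\cup$ of finitely many ideals, itself reduced to monomial sets) and then peeling off one factor at a time; alternatively, simply assert the distributive laws for $\monset{-}$ as a known fact about subsets and cite Facts~\ref{fact110804a} and~\ref{monfact}. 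Either way the proof is short.
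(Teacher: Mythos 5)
Your proposal is correct and matches the paper's argument: the paper likewise reduces both identities, via Facts~\ref{fact110804a}\eqref{fact110804a3}--\eqref{fact110804a4} and Fact~\ref{monfact}\eqref{monfact3}, to the distributive law for finite unions and intersections of monomial sets. The paper simply cites that set-theoretic distributive law rather than spelling out the element chase, but otherwise the reasoning is the same.
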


\begin{proof} 
\eqref{comm1} 
Note that  the left- and right-hand sides of  equation~\eqref{comm1} 
are monomial ideals of $R$ by Fact~\ref{fact110804a}\eqref{fact110804a3}--\eqref{fact110804a4}. Because of Fact~\ref{monfact}\eqref{monfact3},  equation~\eqref{comm1} follows
from the next sequence of equalities:
\begin{align*}
\textstyle\Monset{ \bigcap_{t=1}^l \sum_{i_t =1}^{m_t} K_{t, i_t} } 
& \textstyle=  \bigcap_{t=1}^l \Monset{ \sum_{i_t =1}^{m_t} K_{t, i_t} } \\
& \textstyle=   \bigcap_{t=1}^l \left[ \bigcup_{i_t =1}^{m_t} \monset{ K_{t, i_t} } \right] \\
& \textstyle=  \bigcup_{i_1 =1}^{m_1} \bigcup_{i_2 =1}^{m_2} \cdots \bigcup_{i_l =1}^{m_l} \left[ \bigcap_{t=1}^l \monset{ K_{t, i_t} } \right] \\
& \textstyle=  \bigcup_{i_1 =1}^{m_1} \bigcup_{i_2 =1}^{m_2} \cdots \bigcup_{i_l =1}^{m_l} \left[ \Monset{ \bigcap_{t=1}^l K_{t, i_t} } \right] \\
& \textstyle=  \Monset{ \sum_{i_1 =1}^{m_1} \sum_{i_2 =1}^{m_2} \cdots \sum_{i_l =1}^{m_l} \bigcap_{t=1}^l K_{t, i_t} }
\end{align*}
Here, the third equality is from the distributive law for unions and intersections, while the remaining steps are from Fact~\ref{fact110804a}\eqref{fact110804a3}--\eqref{fact110804a4}.

The verification of equation~\eqref{comm2} is similar, so we omit it.
\end{proof}

\section{M-Irreducible Ideals} \label{sec110803b}

The following notation is extremely convenient for our proofs. 
To motivate the notation, note that when  $\vep$ is $1$, we are thinking of $\vep$ as an arbitrarily small positive real number.

\begin{notation}
Let $ \varepsilon \in \mathbb{Z}_2 $.
Given $ r, \alpha \in \mathbb{R} $, 
we define
$$ r \geq_\varepsilon \alpha \quad \text{provided that} \quad \begin{cases}r \geq \alpha &\text{ if  $\varepsilon = 0$} \\ r > \alpha &\text{ if $\varepsilon = 1$.} \end{cases}  $$
Given $ s \in \bbri$, 
we define
$$ s \geq_\varepsilon \infty \quad \text{provided that} \quad s=\infty. $$
\end{notation}

Employing this new notation, we define a monomial ideal $ J_{i, \alpha, \varepsilon} $ that is generated by pure powers of the single variable $ X_i $. 
Recall our convention that $X_i^{\infty}=0$.

\begin{notation}\label{Jiae}
Given $ \alpha \in \bbri$ and $ \varepsilon \in \mathbb{Z}_2 $, we set 
$$ J_{ i, \alpha, \varepsilon } = \left( \left\{ X_i^r \mid  r \geq_\varepsilon \alpha \right\} \right)R. $$
We use the term ``pure power'' to describe a monomial of the form $X_i^r$.
\end{notation}

Given $ \alpha \in \bbrg$, we use $ \varepsilon \in \mathbb{Z}_2 $ to distinguish between two important cases. Essentially, they represent the difference between
the closed interval $[\alpha,\infty)$ in the case $\vep=0$ and the open  interval $(\alpha,\infty)$ in the case $\vep=1$.
The important difference is the existence of a minimal element in the first case, but not in the second case.
The  case $\alpha=\infty$ may seem strange, but it is quite useful.

\begin{fact}
\label{prince}
Let $ \alpha \in \bbri $ and $ \varepsilon \in \mathbb{Z}_2 $.
\begin{enumerate}[(a)]
\item\label{prince0}
$ J_{ i, \infty, \vep } = 0$. 
\item\label{prince1}
If $ \varepsilon = 0 $, then $ J_{i, \alpha, \varepsilon} = \left( X_i^{\alpha} \right) R$.
\item\label{prince2}
If $\alpha<\infty$, then  $ J_{i, \alpha, \varepsilon}$ is finitely generated if and only if $\vep=0$.
\end{enumerate}
\end{fact}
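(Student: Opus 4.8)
The plan is to treat the three parts in sequence, each reducing to a short unwinding of Notation~\ref{Jiae} together with Fact~\ref{fact110804a}. For~\eqref{prince0}, note that by the convention on $\geq_\varepsilon$ the only $s\in\bbri$ with $s\geq_\varepsilon\infty$ is $s=\infty$, so the generating set $\{X_i^r\mid r\geq_\varepsilon\infty\}$ is contained in $\{X_i^\infty\}=\{0\}$; hence $J_{i,\infty,\varepsilon}=(0)R=0$. For~\eqref{prince1}, assume $\varepsilon=0$. Since $\alpha\geq_0\alpha$, the monomial $X_i^\alpha$ is one of the listed generators, so $(X_i^\alpha)R\subseteq J_{i,\alpha,0}$; conversely, any generator $X_i^r$ with $r\geq\alpha$ factors as $X_i^r=X_i^\alpha X_i^{r-\alpha}$ with $r-\alpha\geq 0$, so $X_i^r\in(X_i^\alpha)R$ by Fact~\ref{fact110804a}\eqref{fact110804a1} (the case $\alpha=\infty$ being covered by~\eqref{prince0}), giving equality.

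Now assume $\alpha\in\bbrg$ for part~\eqref{prince2}. One implication is immediate: if $\varepsilon=0$ then $J_{i,\alpha,0}=(X_i^\alpha)R$ is principal by~\eqref{prince1}. For the converse I will prove the contrapositive, that $J:=J_{i,\alpha,1}$ is not finitely generated. First I record that $\monset{J}=\{\ve{X}^{\ve{t}}\mid t_i>\alpha\}$: indeed $\ve{X}^{\ve{t}}\in J$ iff $\ve{X}^{\ve{t}}$ is a multiple of some $X_i^r$ with $r>\alpha$ by Fact~\ref{fact110804a}\eqref{fact110804a2}, which by Fact~\ref{fact110804a}\eqref{fact110804a1} happens iff $t_i\geq r>\alpha$ for some such $r$, i.e.\ iff $t_i>\alpha$. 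Next, suppose toward a contradiction that $J=(f_1,\dots,f_n)R$. Using the $\mathbb{R}_{\geq 0}^d$-grading of $R$, every monomial occurring with nonzero coefficient in some $f_k$ lies in $J$; collecting these finitely many monomials $\ve{X}^{\ve{r}_1},\dots,\ve{X}^{\ve{r}_N}\in\monset{J}$ gives $J=(\ve{X}^{\ve{r}_1},\dots,\ve{X}^{\ve{r}_N})R$. Put $\beta=\min_{1\leq l\leq N}(r_l)_i$, so $\beta>\alpha$ by the description of $\monset{J}$, and choose $\gamma$ with $\alpha<\gamma<\beta$. Then $X_i^\gamma\in\monset{J}$, but $X_i^\gamma$ is a multiple of no $\ve{X}^{\ve{r}_l}$ — such a relation would force $\gamma\geq(r_l)_i\geq\beta$ by Fact~\ref{fact110804a}\eqref{fact110804a1} — so $X_i^\gamma\notin(\ve{X}^{\ve{r}_1},\dots,\ve{X}^{\ve{r}_N})R$ by Fact~\ref{fact110804a}\eqref{fact110804a2}, contradicting $J=(\ve{X}^{\ve{r}_1},\dots,\ve{X}^{\ve{r}_N})R$.

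The single point deserving care is the reduction, in part~\eqref{prince2}, of an arbitrary finite generating set $\{f_1,\dots,f_n\}$ to a finite set of \emph{monomial} generators lying in $J$ itself; this is exactly where the graded structure of $R$ enters, and it is the step I would write out in full, verifying that a nonzero scalar multiple of a monomial lying in a monomial ideal forces the monomial itself into the ideal. The remaining arguments are routine manipulations with divisibility of monomials as encoded in Fact~\ref{fact110804a}.
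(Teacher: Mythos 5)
Your proof is correct. The paper states this as a \emph{Fact} with no written proof — it is taken as an immediate consequence of Notation~\ref{Jiae} and the graded structure from Fact~\ref{fact110804a} — so there is no paper argument to compare against; your write-up simply supplies the routine details the authors elided. Parts~\eqref{prince0} and~\eqref{prince1} are exactly the intended unwinding of definitions. For part~\eqref{prince2}, the reduction of an arbitrary finite generating set to a finite set of \emph{monomial} generators is indeed the only step that needs an actual argument, and you have correctly identified it: it rests on $J$ being homogeneous for the $\mathbb{R}_{\geq 0}^d$-grading together with the observation that $J=\bigoplus_{f\in\monset J}Af$, so a nonzero $A$-multiple $a\ve{X}^{\ve{r}}\in J$ forces $\ve{X}^{\ve{r}}\in\monset J$. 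The remainder — picking $\gamma$ strictly between $\alpha$ and the minimum $i$-th exponent $\beta$ of the finitely many monomial generators, then invoking Fact~\ref{fact110804a}\eqref{fact110804a1}--\eqref{fact110804a2} to show $X_i^\gamma\notin(\ve{X}^{\ve{r}_1},\dots,\ve{X}^{\ve{r}_N})R$ — is the natural density argument and is correct. (One very minor stylistic point: when you say divisibility ``would force $\gamma\geq(r_l)_i$,'' you only need this single coordinate inequality for the contradiction, so there is no need to worry about the other coordinates of $\ve{r}_l$; your argument already implicitly uses only that.)
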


The  ideal $ J_{i, \alpha, \varepsilon} $ is  generated by pure powers of the single variable $ X_i $. 
Next, we consider the  class of ideals generated by pure powers of more than one variable. 
This notation is the first place where we see the utility of our convention $X_i^\infty=0$, since it allows us to consider all the variables simultaneously
instead of worrying about partial lists of the variables.

\begin{notation}\label{Jae}
Given  $ \ve{\alpha}= \left( \alpha_1, \ldots, \alpha_d \right) \in \bbrid$, and $ \ve{\varepsilon}= \left( \varepsilon_1, \ldots, \varepsilon_d \right) \in \mathbb{Z}_2^d $,
we set
$$ J_{ \ve{\alpha}, \ve{\varepsilon} } = \left( \left\{ X_i^{r_i} \mid i = 1, \ldots, d \text{  and  }  r_i \geq_{\varepsilon_i} \alpha_i   \right\} \right)R. $$
\end{notation}

\begin{ex}\label{ex110805a}
In the case $d=2$, using  constants $a,b\in\bbrg$ and $\vep,\vep'\in\bbz_2$,  we have eight different possibilities for $J_{ \ve{\alpha}, \ve{\varepsilon} }$:
\begin{align*}
&J_{(a,b),(0,0)}=(X^a,Y^b)R
&&J_{(a,b),(0,1)}=(X^a,Y^{b'}\mid b'>b)R
\\
&J_{(a,b),(1,0)}=(X^{a'},Y^b\mid a'>a)R\quad
&&J_{(a,b),(1,1)}=(X^{a'},Y^{b'}\mid \text{$a'>a$ and $b'>b$})R
\\
&J_{(\infty,b),(\vep,0)}=(Y^b)R
&&J_{(\infty,b),(\vep,1)}=(Y^{b'}\mid b'>b)R
\\
&J_{(a,\infty),(1,\vep')}=(X^{a'}\mid a'>a)R
&&J_{(\infty,\infty),(\vep,\vep')}=0.
\end{align*}
\end{ex}

The following connection between ideals of the form $ J_{ \ve{\alpha}, \ve{\varepsilon} } $ and those of the form $ J_{ i, \alpha, \varepsilon } $ is immediate:

\begin{fact}\label{Jaedecomp}
Given $ \ve{\alpha} \in \bbrid $ and $ \ve{\varepsilon} \in \mathbb{Z}_2^d $, we have the following equality: 
$$ J_{\ve{\alpha}, \ve{\varepsilon}} = \sum_{i=1}^d J_{i, \alpha_i, \varepsilon_i} .$$
\end{fact}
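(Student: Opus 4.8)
The plan is to reduce everything to generating sets and invoke the standard description of a sum of ideals as generated by the union of generating sets. First I would set $S_i = \left\{ X_i^{r_i} \mid r_i \geq_{\varepsilon_i} \alpha_i \right\}$ for $i = 1, \ldots, d$, so that $J_{i, \alpha_i, \varepsilon_i} = (S_i)R$ by Notation~\ref{Jiae}. The point is then to observe that the generating set listed in Notation~\ref{Jae} for $J_{\ve{\alpha}, \ve{\varepsilon}}$, namely the set of all pure powers $X_i^{r_i}$ with $r_i \geq_{\varepsilon_i} \alpha_i$ as $i$ ranges over $1, \ldots, d$, is literally $\bigcup_{i=1}^d S_i$; hence $J_{\ve{\alpha}, \ve{\varepsilon}} = \left( \bigcup_{i=1}^d S_i \right) R$.

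Next I would apply the standard equality $\sum_{\lambda} (S_{\lambda}) R = \left( \bigcup_{\lambda} S_{\lambda} \right) R$ recalled just before Definition~\ref{lcm} (or, equivalently, Fact~\ref{fact110804a}\eqref{fact110804a3} together with Fact~\ref{monfact}\eqref{monfact3}) to the finite family $\{ S_i \}_{i=1}^d$. This yields
$\sum_{i=1}^d J_{i, \alpha_i, \varepsilon_i} = \sum_{i=1}^d (S_i) R = \left( \bigcup_{i=1}^d S_i \right) R = J_{\ve{\alpha}, \ve{\varepsilon}}$,
which is the claimed identity.

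Since every step is a direct appeal to a definition or a previously established fact, there is no genuine obstacle here; the one point requiring a moment's care is the case $\alpha_i = \infty$, where the convention $X_i^{\infty} = 0$ forces $S_i = \{0\}$ and hence $J_{i, \infty, \varepsilon_i} = 0$ by Fact~\ref{prince}\eqref{prince0}. In that case the $i$-th summand contributes nothing on the left, and the symbol $X_i^{\infty} = 0$ likewise contributes nothing to the generating set on the right, so the displayed chain of equalities is unaffected and the argument goes through verbatim.
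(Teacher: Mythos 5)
Your proposal is correct and is exactly the argument the paper tacitly has in mind when it labels this fact as ``immediate'': both $J_{\ve{\alpha},\ve{\varepsilon}}$ and $\sum_i J_{i,\alpha_i,\varepsilon_i}$ are generated by the same set $\bigcup_{i=1}^d S_i$ of pure powers, and the identity $\sum_\lambda (S_\lambda)R = (\bigcup_\lambda S_\lambda)R$ recalled just before Definition~\ref{lcm} closes the gap. Your side remark about the $\alpha_i=\infty$ case is also sound; the convention $X_i^\infty=0$ makes the $i$-th summand vanish on both sides without disturbing the chain of equalities.
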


The ideals defined next are the irreducible elements of the set of monomial ideals.

\begin{definition}\label{irred}
A monomial ideal $ I \subseteq R$ is \textit{m-irreducible} (short for \textit{monomial-irreducible}) provided that 
for all monomial ideals $ J $ and $ K $ of $R$ such that $ I = J \cap K $, either $ I = J $ or $ I = K $.
\end{definition}

A straightforward induction argument establishes the following property.

\begin{fact}\label{fact110806a}
Let $I$ be an m-irreducible monomial ideal of $R$.
Given monomial ideals $I_1,\ldots,I_n$ of $R$,
if $I=\cap_{j=1}^nI_j$, then $I=I_j$ for some $j$.
\end{fact}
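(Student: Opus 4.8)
The statement is the ``finite-intersection'' version of m-irreducibility: if $I$ is m-irreducible and $I=\bigcap_{j=1}^n I_j$, then $I=I_j$ for some $j$. The plan is a routine induction on $n$, using the two-factor definition (Definition~\ref{irred}) as the base of the inductive step. The only mild care needed is in handling the degenerate values of $n$.

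\textbf{Base cases.} For $n=1$ the hypothesis $I=I_1$ is the conclusion, so there is nothing to prove. (If one prefers to allow $n=0$, the empty intersection is all of $R$, forcing $I=R$, and then $I$ is vacuously equal to ``some $I_j$'' only if the list is nonempty; in practice the statement is invoked with $n\geq 1$, so I would simply start the induction at $n=1$, or even at $n=2$ where Definition~\ref{irred} applies directly.) The case $n=2$ is exactly Definition~\ref{irred}.

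\textbf{Inductive step.} Suppose the claim holds for intersections of $n-1$ monomial ideals, and suppose $I=\bigcap_{j=1}^n I_j$ with $n\geq 2$. Write $I = I_1 \cap \left(\bigcap_{j=2}^n I_j\right)$. By Fact~\ref{fact110804a}\eqref{fact110804a4}, $J:=\bigcap_{j=2}^n I_j$ is again a monomial ideal of $R$, so $I = I_1 \cap J$ is an intersection of two monomial ideals. Since $I$ is m-irreducible, Definition~\ref{irred} gives $I=I_1$ or $I=J$. In the first case we are done, taking $j=1$. In the second case, $I = J = \bigcap_{j=2}^n I_j$ is an intersection of $n-1$ monomial ideals equal to the m-irreducible ideal $I$, so by the induction hypothesis $I=I_j$ for some $j\in\{2,\ldots,n\}$. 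Either way $I=I_j$ for some $j$, completing the induction.

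\textbf{Main obstacle.} There is no real obstacle here; the one point that must not be glossed over is that $\bigcap_{j=2}^n I_j$ is itself a monomial ideal, so that Definition~\ref{irred} genuinely applies to the splitting $I = I_1 \cap \bigl(\bigcap_{j=2}^n I_j\bigr)$ — and this is precisely what Fact~\ref{fact110804a}\eqref{fact110804a4} supplies. Everything else is the standard bookkeeping of a one-step induction.
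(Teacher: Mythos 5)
Your proof is correct and matches the paper's intent exactly: the paper simply asserts that ``a straightforward induction argument establishes the following property,'' and your argument is precisely that induction, with the one necessary observation (that $\bigcap_{j=2}^n I_j$ is again a monomial ideal, via Fact~\ref{fact110804a}\eqref{fact110804a4}) made explicit so that Definition~\ref{irred} applies.
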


Our first main result, which we prove next, is the fact that the m-irreducible monomial ideals of $R$ are exactly the
$J_{ \ve{\alpha}, \ve{\varepsilon} }$; it contains Theorem~\ref{thm110803a} from the introduction.
Notice that it includes the case $J_{ \ve{\infty}, \ve{\varepsilon} } =0$ where $\ve\infty=(\infty,\ldots,\infty)$.

\begin{theorem}\label{eq}
Let $ I \subseteq R $ be a monomial ideal. Then the following are equivalent:
\begin{enumerate}[\rm(i)]
\item \label{eq1}
$ I $ is generated by pure powers of a subset of the variables $ X_1, \ldots, X_d $;
\item\label{eq2} 
there exist $ \ve{\alpha} \in \bbrid$ and $ \ve{\varepsilon} \in \mathbb{Z}_2^d $ such that $ I = J_{ \ve{\alpha}, \ve{\varepsilon} } $; and
\item \label{eq3}
$ I $ is m-irreducible. 
\end{enumerate}
\end{theorem}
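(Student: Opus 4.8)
The plan is to prove the cycle of implications \eqref{eq1} $\Rightarrow$ \eqref{eq2} $\Rightarrow$ \eqref{eq3} $\Rightarrow$ \eqref{eq1}, using the combinatorial machinery already established.

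First I would handle \eqref{eq1} $\Rightarrow$ \eqref{eq2}, which should be essentially a bookkeeping exercise. Suppose $I$ is generated by pure powers of the variables; collecting the pure powers of each fixed variable $X_i$ that appear among the generators, one obtains a set $T_i \subseteq \bbrg$ of exponents, and $I = \sum_{i=1}^d (\{X_i^r \mid r \in T_i\})R$. For each $i$, put $\alpha_i = \inf(T_i)$ (with $\alpha_i = \infty$ if $T_i = \emptyset$), and set $\varepsilon_i = 0$ if the infimum is attained or $T_i$ contains exponents arbitrarily close to $\alpha_i$ from above but no minimum—here I would use Fact~\ref{fact110804a}\eqref{fact110804a1} to see that $(\{X_i^r \mid r \in T_i\})R = J_{i,\alpha_i,\varepsilon_i}$, since divisibility of pure powers of $X_i$ is governed entirely by the exponent, and the monomial set of the left side is $\{X_i^s \mid s \geq r \text{ for some } r \in T_i\}$, which equals $\{X_i^s \mid s \geq_{\varepsilon_i} \alpha_i\}$ for the appropriate choice of $\varepsilon_i$. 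Then Fact~\ref{Jaedecomp} gives $I = \sum_i J_{i,\alpha_i,\varepsilon_i} = J_{\ve\alpha,\ve\varepsilon}$.

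The implication \eqref{eq3} $\Rightarrow$ \eqref{eq1} is the contrapositive-friendly direction: I would show that if $I$ is \emph{not} generated by pure powers, then $I$ is not m-irreducible. Suppose $I = (G)R$ with $G \subseteq \monset R$. If $I$ fails to be generated by pure powers, there is a monomial $\ve X^{\ve b} \in \monset I$ that is a "genuine mixed" generator—more precisely, $\ve X^{\ve b} \in \monset I$ but $\ve X^{\ve b}$ lies in no $J_{i,b_i,0} = (X_i^{b_i})R$ contained in $I$; one extracts such a $\ve b$ from the failure of \eqref{eq1}. Then Lemma~\ref{lem110806a} writes $I = \bigcap_{j=1}^d I_j$ with $I_j = (G \cup \{X_j^{b_j}\})R \supseteq I$, and m-irreducibility (via Fact~\ref{fact110806a}) would force $I = I_j$ for some $j$, i.e. $X_j^{b_j} \in I$; iterating this argument on all coordinates, or directly showing each such equality contradicts the choice of $\ve b$, gives a contradiction. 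I expect this to be the main obstacle: carefully pinning down the right non-pure-power witness $\ve b$ and verifying that $X_j^{b_j} \notin I$ for every $j$ (so that no $I_j$ equals $I$) requires a clean argument, perhaps by choosing $\ve b$ with the property that $b_j > 0$ for the relevant coordinates and no coordinate projection already lies in $I$—one may need to first reduce to a minimal such generator or argue by contradiction using Fact~\ref{fact110804a}\eqref{fact110804a2}.

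Finally, \eqref{eq2} $\Rightarrow$ \eqref{eq3}: I would show directly that each $J_{\ve\alpha,\ve\varepsilon}$ is m-irreducible. Suppose $J_{\ve\alpha,\ve\varepsilon} = J \cap K$ for monomial ideals $J, K$, with $J, K \supseteq J_{\ve\alpha,\ve\varepsilon}$ both proper containments; I seek a contradiction. Pick monomials $\ve X^{\ve s} \in \monset J \setminus \monset{J_{\ve\alpha,\ve\varepsilon}}$ and $\ve X^{\ve t} \in \monset K \setminus \monset{J_{\ve\alpha,\ve\varepsilon}}$. Then $\operatorname{lcm}(\ve X^{\ve s}, \ve X^{\ve t}) = \ve X^{\ve u}$ with $u_i = \max\{s_i, t_i\}$ lies in $J \cap K = J_{\ve\alpha,\ve\varepsilon}$ by Lemma~\ref{int} (or directly, since it is a multiple of both $\ve X^{\ve s}$ and $\ve X^{\ve t}$), so by Fact~\ref{fact110804a}\eqref{fact110804a2} it is a multiple of some $X_i^{r_i}$ with $r_i \geq_{\varepsilon_i} \alpha_i$, whence $u_i \geq_{\varepsilon_i} \alpha_i$, i.e. $u_i = \max\{s_i,t_i\} \geq_{\varepsilon_i}\alpha_i$, so one of $s_i, t_i$ is $\geq_{\varepsilon_i}\alpha_i$—meaning $\ve X^{\ve s} \in J_{\ve\alpha,\ve\varepsilon}$ or $\ve X^{\ve t}\in J_{\ve\alpha,\ve\varepsilon}$ (using that $J_{\ve\alpha,\ve\varepsilon} = \sum_i J_{i,\alpha_i,\varepsilon_i}$ and Fact~\ref{fact110804a}\eqref{fact110804a1}), a contradiction. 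This closes the cycle. (A small subtlety to check in this last step: the case $\alpha_i = \infty$, where $r_i \geq_{\varepsilon_i}\infty$ forces $r_i = \infty$ and $X_i^\infty = 0$, so that coordinate contributes nothing—the convention must be respected, but it does not break the argument since a nonzero monomial is never a multiple of $0$.)
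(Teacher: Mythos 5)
Your plan follows the paper's proof essentially step for step: the same three-implication cycle, the same $\inf$-and-$\varepsilon$ bookkeeping for (i)$\Rightarrow$(ii), the same lcm argument via Lemma~\ref{int} for (ii)$\Rightarrow$(iii), and the same use of Lemma~\ref{lem110806a} together with Fact~\ref{fact110806a} for (iii)$\Rightarrow$(i). The one obstacle you flag in (iii)$\Rightarrow$(i) is resolved in the paper exactly as you anticipate, by contradiction using Fact~\ref{fact110804a}: if $I=I_k$ then $X_k^{b_k}\in(G)R$, so $X_k^{b_k}$ is a multiple of some $g\in G$; Fact~\ref{fact110804a}\eqref{fact110804a1} forces such a $g$ to be a pure power $X_k^{a}$ with $a\leq b_k$, and then $\ve{X}^{\ve{b}}\in(X_k^{b_k})R\subseteq(X_k^{a})R$ makes $\ve{X}^{\ve{b}}$ a multiple of the pure power $X_k^{a}\in G$, contradicting the choice of $\ve{X}^{\ve{b}}$.
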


\begin{proof}
$\eqref{eq1}\implies\eqref{eq2}$
Assume that $I$ is generated by pure powers of a subset of  the variables. It follows that there are (possibly empty)
sets $S_1,\ldots,S_d\subseteq\bbrg$ such that
$I=\sum_{i=1}^d(\{X_i^{z_i}\mid z_i\in S_i\})R$. 
For $i=1,\ldots,d$ set $\alpha_i = \inf{S_i} \in\bbri$, and let $ \ve{\alpha} =(\alpha_1,\ldots,\alpha_d)\in\bbrid$.
(Here we assume that $\inf\emptyset = \infty$.)
Furthermore, for $i=1,\ldots,d$ set 
$$\vep_i = \begin{cases}
0&\text{if $ \alpha_i \in S_i $} \\
1&\text{if $ \alpha_i \notin S_i $} \end{cases}
$$
and let $ \ve{\vep} =(\vep_1,\ldots,\vep_d)\in\bbz^d_2$.
It is straightforward to show that $I=J_{ \ve{\alpha}, \ve{\varepsilon} }$.

$\eqref{eq2}\implies\eqref{eq3}$
Let $ \ve{\alpha} \in \bbrid$ and $ \ve{\varepsilon} \in \mathbb{Z}_2^d $ be given, and suppose by way of contradiction that $ J_{ \ve{\alpha}, \ve{\varepsilon} } $ is not m-irreducible. 
By definition, there exist monomial ideals $ J$ and $ K $ such that $ J_{ \ve{\alpha}, \ve{\varepsilon} } = J \cap K $, with $ J_{ \ve{\alpha}, \ve{\varepsilon} } \neq J $
and $ J_{ \ve{\alpha}, \ve{\varepsilon} } \neq K $. 
It follows that $ J_{ \ve{\alpha}, \ve{\varepsilon} }\subsetneq J$ and $ J_{ \ve{\alpha}, \ve{\varepsilon} }\subsetneq K$,
so Fact~\ref{monfact}\eqref{monfact3} provides 
monomials $ \ve{X}^{\ve{q}} \in J \setminus J_{ \ve{\alpha}, \ve{\varepsilon} } $ and  $ \ve{X}^{\ve{r}} \in K \setminus J_{ \ve{\alpha}, \ve{\varepsilon} } $. 
If there exist $ q_i $ or $ r_i $ such that $ r_i \geq_{\varepsilon_i} \alpha_i$ or $ q_i \geq_{\varepsilon_i} \alpha_i$, then  Fact~\ref{fact110804a}\eqref{fact110804a1}
implies that $ \ve{X}^{\ve{q}} \in \left( X_i^{\alpha_i} \right) \subseteq J_{ \ve{\alpha}, \ve{\varepsilon} } $,  a contradition. 

We conclude  that for all $ i $, we have $ q_i <  \alpha_i $ and $ r_i < \alpha_i $, so $ p_i := \max \{ q_i, r_i \} < \alpha_i $. 
By Fact~\ref{fact110804a}\eqref{fact110804a1}, this implies that $ \ve{X}^{\ve{p}} = \operatorname{lcm}( \ve{X}^{\ve{q}}, \ve{X}^{\ve{r}} )$ is not a  multiple of 
any generator of $ J_{ \ve{\alpha}, \ve{\varepsilon} } $, hence $ \ve{X}^{\ve{p}} \notin J_{ \ve{\alpha}, \ve{\varepsilon} } $ by Fact~\ref{fact110804a}\eqref{fact110804a2}. 
However, Lemma~\ref{int} implies that $ \ve{X}^{\ve{p}} \in J \cap K =J_{ \ve{\alpha}, \ve{\varepsilon} }$,  a contradiction.

$\eqref{eq3}\implies\eqref{eq1}$
Assume that $I$ is m-irreducible, and let $G\subseteq\monset R$ be a generating set for $I$.
Suppose by way of contradiction that $I$ is not generated by pure powers of some of the variables $ X_1, \ldots, X_d $. 
Then there is a  monomial $ \ve{X}^{\ve{b}} \in G $ that is not a  multiple of any pure power $ X_i^a \in G $.

For $j=1,\ldots,d$ we consider  the monomial ideals 
$ I_j = ( G \cup \{ X_j^{b_j} \} ) R$.

Claim: $ I \neq I_j $ for  $ j =1, \ldots, d $.
Suppose by way of contradiction that there exists an index $ k $ such that $ I = I_k $,
that is, such that $ ( G )R = ( G \cup \{ X_k^{b_k} \} ) R$. Fact~\ref{monfact}\eqref{monfact3} implies that $X_k^{b_k}$ is a multiple of some $g\in G$.
By Fact~\ref{fact110804a}\eqref{fact110804a1}, we conclude  that $g=X_k^a$ for some $a\leq b_k$, so we have
$\ve X^{\ve b}\in(X_k^{b_k})R\subseteq (X_k^a)R$.
That is, the monomial $\ve X^{\ve b}$ is a multiple of the pure power $X_k^a=g\in G$, a contradiction.
This establishes the claim.

Lemma~\ref{lem110806a} implies that $ I = \bigcap_{j=1}^d I_j $.
Thus, the claim conspires with Fact~\ref{fact110806a} to contradict the fact that $I$ is m-irreducible.
\end{proof}

We explicitly document a special case of Theorem~\ref{eq} for use in the sequel.

\begin{corollary}\label{cor}
Each ideal $ J_{ i, \alpha, \varepsilon } $ is m-irreducible. 
\end{corollary}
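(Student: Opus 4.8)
The plan is to recognize $J_{i,\alpha,\varepsilon}$ as a special case of the ideals classified in Theorem~\ref{eq} and then simply invoke that theorem. Indeed, by Notation~\ref{Jiae} the ideal $J_{i,\alpha,\varepsilon}$ is generated by the set $\{X_i^r \mid r \geq_\varepsilon \alpha\}$, which consists of pure powers of the single variable $X_i$. In particular, $J_{i,\alpha,\varepsilon}$ is generated by pure powers of a subset of the variables $X_1,\ldots,X_d$, namely, the subset $\{X_i\}$; when $\alpha = \infty$ this subset may even be taken to be empty, since then $J_{i,\infty,\varepsilon} = 0$ by Fact~\ref{prince}\eqref{prince0}. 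Thus condition~\eqref{eq1} of Theorem~\ref{eq} is satisfied, and the implication \eqref{eq1}$\implies$\eqref{eq3} of that theorem yields that $J_{i,\alpha,\varepsilon}$ is m-irreducible.

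Alternatively, one can phrase this entirely in the language of Notation~\ref{Jae}: set $\ve{\alpha} = (\alpha_1,\ldots,\alpha_d)$ with $\alpha_i = \alpha$ and $\alpha_j = \infty$ for $j \neq i$, and set $\ve{\varepsilon} = (\varepsilon_1,\ldots,\varepsilon_d)$ with $\varepsilon_i = \varepsilon$ and $\varepsilon_j = 0$ for $j \neq i$. Then Fact~\ref{Jaedecomp} together with Fact~\ref{prince}\eqref{prince0} gives $J_{\ve{\alpha},\ve{\varepsilon}} = \sum_{j=1}^d J_{j,\alpha_j,\varepsilon_j} = J_{i,\alpha,\varepsilon}$, so condition~\eqref{eq2} of Theorem~\ref{eq} holds and the implication \eqref{eq2}$\implies$\eqref{eq3} applies. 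There is no genuine obstacle here: the content of the corollary is already contained in Theorem~\ref{eq}, and the proof is merely a matter of unwinding the relevant definitions to match one of its hypotheses.
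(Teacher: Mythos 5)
Your proposal is correct and matches the paper's argument: the paper likewise observes that $J_{i,\alpha,\varepsilon}$ is the special case $\ve{\alpha}=(\infty,\ldots,\infty,\alpha,\infty,\ldots,\infty)$ of Theorem~\ref{eq}. Your second paragraph is essentially verbatim the paper's proof, and your first paragraph is an equivalent unwinding via condition~(i) instead of~(ii).
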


\begin{proof} 
This is the special case $\ve\alpha=(\infty,\ldots,\infty,\alpha,\infty,\ldots,\infty)$ of Theorem~\ref{eq}.
\end{proof}

\section{Ideals Admitting Finite M-Irreducible Decompositions}\label{sec110806a}

We now turn our attention to the task of characterizing the monomial ideals of $R$ that admit  decompositions into
finite intersections of m-irreducible ideals.

\begin{definition}\label{red}
Let $ I \subseteq R $ be a monomial ideal. An \textit{m-irreducible decomposition} of $ I $ is a decomposition
$ I = \bigcap_{\lambda \in \Lambda} I_{\lambda} $ where each $I_{\lambda}$ is an m-irreducible monomial ideal of $R$.
If the index set $\Lambda$ is finite, we say that $ I = \bigcap_{\lambda \in \Lambda} I_{\lambda} $ is a \emph{finite m-irreducible decomposition}. 
\end{definition}

Our second main result shows that the monomial ideals of $R$ that admit finite m-irreducible decompositions
are precisely the finite sums of ideals of the next form. 

\begin{notation}\label{Iae}
Let $ \ve{\alpha} \in \bbrid$ and $ \ve{\varepsilon} \in \mathbb{Z}_2^d $ be given, and set
$$ I_{ \ve{\alpha}, \ve{\varepsilon} } = \left( \left\{ \ve{X}^{\ve{r}} \mid i = 1, \ldots, d \text{  and  }  r_i \geq_{\varepsilon_i} \alpha_i   \right\} \right)R. $$
\end{notation}

\begin{ex}\label{ex110806a}
With the zero-vector $\ve 0=(0,\ldots,0)$ we have
$I_{\ve\alpha,\ve 0}=(\ve X^{\ve\alpha})R$.
If $\alpha_i=\infty$ for any $i$, then $I_{\ve\alpha,\ve\vep}=0$.
\end{ex}

As a first step, we show next that each ideal of the form $ I_{\ve{\alpha}, \ve{\varepsilon}}$ has a finite m-irreducible decomposition.

\begin{lem}\label{Idecomp}
Given $ \ve{\alpha} \in \bbrid$ and $ \ve{\varepsilon} \in \mathbb{Z}_2^d $, we have
$ I_{\ve{\alpha}, \ve{\varepsilon}} = \bigcap_{i=1}^d J_{i, \alpha_i, \varepsilon_i} $.
\end{lem}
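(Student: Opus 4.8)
The plan is to verify the equality $I_{\ve\alpha,\ve\vep}=\bigcap_{i=1}^d J_{i,\alpha_i,\vep_i}$ by comparing monomial sets, using Fact~\ref{monfact}\eqref{monfact3} and Fact~\ref{fact110804a}\eqref{fact110804a4} to reduce to a statement about exponent vectors. Both sides are monomial ideals, so it suffices to show $\monset{I_{\ve\alpha,\ve\vep}}=\bigcap_{i=1}^d\monset{J_{i,\alpha_i,\vep_i}}$. First I would unwind the generators of each side. A monomial $\ve X^{\ve t}$ lies in $I_{\ve\alpha,\ve\vep}$ if and only if it is a multiple of one of the listed generators $\ve X^{\ve r}$ with $r_i\geq_{\vep_i}\alpha_i$ for some $i$ (by Fact~\ref{fact110804a}\eqref{fact110804a2}); by Fact~\ref{fact110804a}\eqref{fact110804a1} this happens exactly when $t_j\geq r_j$ for all $j$, which — choosing $\ve r$ optimally — amounts to the condition that $t_i\geq_{\vep_i}\alpha_i$ for some $i\in\{1,\dots,d\}$. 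Here I should be mildly careful with the case $\alpha_i=\infty$, where $r_i\geq_{\vep_i}\infty$ forces $r_i=\infty$ and hence $X_i^{r_i}=0$, so that index contributes no generators; but the convention $t_i\geq_{\vep_i}\infty\iff t_i=\infty$ is never satisfied by a genuine monomial, so the characterization "$t_i\geq_{\vep_i}\alpha_i$ for some $i$" remains correct, and when every $\alpha_i=\infty$ both sides are $0$, matching Example~\ref{ex110806a}.

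Next I would analyze the right-hand side. By Fact~\ref{fact110804a}\eqref{fact110804a4}, $\ve X^{\ve t}\in\bigcap_{i=1}^d\monset{J_{i,\alpha_i,\vep_i}}$ iff $\ve X^{\ve t}\in\monset{J_{i,\alpha_i,\vep_i}}$ for every $i$; and by Fact~\ref{fact110804a}\eqref{fact110804a2} and~\eqref{fact110804a1}, $\ve X^{\ve t}\in\monset{J_{i,\alpha_i,\vep_i}}$ iff $\ve X^{\ve t}$ is a multiple of some $X_i^{r}$ with $r\geq_{\vep_i}\alpha_i$, which holds iff $t_i\geq_{\vep_i}\alpha_i$. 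So the right-hand condition reads: $t_i\geq_{\vep_i}\alpha_i$ for all $i$.

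This is where I expect the only real subtlety: the two characterizations obtained are "$t_i\geq_{\vep_i}\alpha_i$ for \emph{some} $i$" (left side) versus "$t_i\geq_{\vep_i}\alpha_i$ for \emph{all} $i$" (right side), which look different. The resolution is that $\ve X^{\ve t}$ is a \emph{bona fide} monomial, so every $t_j$ is a finite nonnegative real; hence $t_j\geq 0=_{?}\alpha_j$ fails only when $\alpha_j>t_j$ or ($\alpha_j=t_j$ and $\vep_j=1$) or $\alpha_j=\infty$. More to the point, for a monomial generator $\ve X^{\ve r}$ of $I_{\ve\alpha,\ve\vep}$, \emph{all} its coordinates are finite, and once $r_i\geq_{\vep_i}\alpha_i$ holds for the distinguished index, nothing is required of the other coordinates — so a multiple $\ve X^{\ve t}$ need only satisfy $t_i\geq_{\vep_i}\alpha_i$ for that one $i$. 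I will therefore present the argument as a direct double containment at the level of exponent vectors: for "$\subseteq$", given $\ve X^{\ve t}\in\monset{I_{\ve\alpha,\ve\vep}}$ pick the witnessing index $i_0$ with $t_{i_0}\geq_{\vep_{i_0}}\alpha_{i_0}$, and observe $X_{i_0}^{t_{i_0}}$ (hence $\ve X^{\ve t}$) lies in $J_{i_0,\alpha_{i_0},\vep_{i_0}}$; for the other indices $i$, note that $I_{\ve\alpha,\ve\vep}\subseteq J_{i,\alpha_i,\vep_i}$ already — because every generator $\ve X^{\ve r}$ of $I_{\ve\alpha,\ve\vep}$ with witnessing coordinate $r_j\geq_{\vep_j}\alpha_j$... hmm, that inclusion is \emph{not} coordinatewise true. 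The cleaner route is simply: for each generator $\ve X^{\ve r}$ of $I_{\ve\alpha,\ve\vep}$, by construction some coordinate satisfies $r_i\geq_{\vep_i}\alpha_i$, so $\ve X^{\ve r}\in J_{i,\alpha_i,\vep_i}\subseteq\sum_j J_{j,\alpha_j,\vep_j}$, and comparing with Fact~\ref{Jaedecomp} won't help since that gives a \emph{sum}, not an intersection. So instead I will just do the monomial-set computation head-on: show $\bigcap_i\monset{J_{i,\alpha_i,\vep_i}}=\{\ve X^{\ve t}:t_i\geq_{\vep_i}\alpha_i\ \forall i\}$ and $\monset{I_{\ve\alpha,\ve\vep}}=\{\ve X^{\ve t}:t_i\geq_{\vep_i}\alpha_i\ \text{for some}\ i\}$, and then reconcile them — the key observation being that a monomial $\ve X^{\ve t}$ satisfying $t_i\geq_{\vep_i}\alpha_i$ for a single $i$ is, by Fact~\ref{fact110804a}\eqref{fact110804a1}, a multiple of the generator $X_i^{\alpha_i}$ of $I_{\ve\alpha,\ve\vep}$ (when $\vep_i=0$) or of $X_i^{r}$ for suitable $\alpha_i<r\le t_i$ (when $\vep_i=1$), and conversely any multiple of a generator $\ve X^{\ve r}\in I_{\ve\alpha,\ve\vep}$ inherits $t_i\geq_{\vep_i}\alpha_i$ at the witnessing coordinate. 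Actually the discrepancy dissolves once one notices: the generating set of $I_{\ve\alpha,\ve\vep}$ consists of monomials $\ve X^{\ve r}$ where only \emph{one} coordinate is constrained and the rest are free, so $\monset{I_{\ve\alpha,\ve\vep}}$ is exactly the upward closure of those, i.e.\ $\{t: \exists i,\ t_i\geq_{\vep_i}\alpha_i\}$ — wait, that's wrong too if the free coordinates can be $0<\alpha_j$. Let me just commit to the following final structure: prove $\monset{\bigcap_i J_{i,\alpha_i,\vep_i}} = \{\ve X^{\ve t} : t_i \geq_{\vep_i}\alpha_i \text{ for all } i\}$ directly, prove $\monset{I_{\ve\alpha,\ve\vep}}$ equals the same set directly from Notation~\ref{Iae} by re-examining which monomials are multiples of the generators, and conclude by Fact~\ref{monfact}\eqref{monfact3}. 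The main obstacle is thus purely bookkeeping around the $\geq_\vep$ notation and the $\infty$ convention; no deep idea is needed.
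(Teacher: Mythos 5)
Your analysis of the right-hand side is fine, but your analysis of the left-hand side starts from a misreading of Notation~\ref{Iae}. You describe the generators of $I_{\ve\alpha,\ve\vep}$ as the monomials $\ve X^{\ve r}$ with ``$r_i\geq_{\vep_i}\alpha_i$ for \emph{some} $i$,'' treating the condition as a disjunction over $i$. In fact the defining condition is a conjunction: the generators are the monomials $\ve X^{\ve r}$ with $r_i\geq_{\vep_i}\alpha_i$ for \emph{every} $i=1,\ldots,d$. This is unambiguous from the displayed examples in the introduction, e.g.\ $I_{(a,b),(0,0)}=(X^rY^s\mid r\geq a\ \text{and}\ s\geq b)R$. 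Once this is fixed, $\monset{I_{\ve\alpha,\ve\vep}}$ is exactly $\{\ve X^{\ve t}\mid t_i\geq_{\vep_i}\alpha_i\ \forall i\}$, which matches your (correct) computation of $\bigcap_i\monset{J_{i,\alpha_i,\vep_i}}$ on the nose, and there is nothing to reconcile. Your third paragraph is almost entirely devoted to trying to resolve an ``existential vs.\ universal'' discrepancy that only exists because of the misreading; the hedging, the retracted claims, and the false trail through Fact~\ref{Jaedecomp} (which concerns sums, not intersections, as you correctly note) are all symptoms of it. Your closing ``commitment'' states the correct target identity of monomial sets, but you never actually identify the misreading as the root cause, so the step ``prove $\monset{I_{\ve\alpha,\ve\vep}}$ equals the same set \ldots by re-examining which monomials are multiples of the generators'' is left entirely unexecuted.

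For comparison, the paper avoids any case analysis over indices by invoking Lemma~\ref{int}: $\bigcap_{i=1}^d(\{X_i^{r_i}\mid r_i\geq_{\vep_i}\alpha_i\})R$ is generated by the pairwise least common multiples, and $\operatorname{lcm}(X_1^{r_1},\ldots,X_d^{r_d})=X_1^{r_1}\cdots X_d^{r_d}=\ve X^{\ve r}$ since the variables are distinct; this immediately yields $I_{\ve\alpha,\ve\vep}$. The paper handles the $\alpha_i=\infty$ case separately (both sides vanish, by Example~\ref{ex110806a}). Your monomial-set approach would also work once the definition is read correctly, but it is longer and, as written, does not get off the ground. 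If you rewrite, I would either cite Lemma~\ref{int} as the paper does, or state at the outset the correct description $\monset{I_{\ve\alpha,\ve\vep}}=\{\ve X^{\ve t}\mid t_i\geq_{\vep_i}\alpha_i\ \forall i\}$ with a one-line justification via Fact~\ref{fact110804a}(a)--(b), then compare to the intersection.
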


\begin{proof}
If $\alpha_{i}=\infty$ for some $i$, then we have $J_{i, \alpha_{i}, \varepsilon_{i}} =0$, so 
$ \bigcap_{i=1}^d J_{i, \alpha_{i}, \varepsilon_{i}} =0$;
thus the desired equality follows from Example~\ref{ex110806a} in this case.
Assume now that $\alpha_{i}\neq\infty$ for all $i$.
In the following computation, the second equality is from Lemma~\ref{int}:
\begin{align*} \bigcap_{i=1}^d J_{i, \alpha_i, \varepsilon_i} 
&= \bigcap_{i=1}^d \left( \{ X_i^{r_i} \mid r_i \geq_{\varepsilon_i} \alpha_i\} \right)R  \\ 
&=  \left( \underset{1 \leq i \leq d}{\textup{lcm}} \left\{  X_i^{r_i} \right\} \mid r_i \geq_{\varepsilon_i} \alpha_i   \right)R  \\ 
& = \left( \left\{ X_1^{r_1} X_2^{r_2} \cdots X_d^{r_d} \mid r_i \geq_{\varepsilon_i} \alpha_i    \right\} \right)R \\ 
& = \left( \left\{ \ve{X}^{\ve{r}} \mid r_i \geq_{\varepsilon_i} \alpha_i   \right\} \right)R \\ 
& = I_{ \ve{\alpha}, \ve{\varepsilon} } 
\end{align*}
The first, fourth, and fifth equalities are by definition, and the third equality is straightforward.
\end{proof}

\begin{disc}\label{disc110806a}
It is worth noting that the decomposition from Lemma~\ref{Idecomp} may be redundant, in the sense that some of the ideals
in the intersection may be removed without affecting the intersection: if
$\alpha_i=0$ and $\vep_i=0$, then $J_{i, \alpha_i, \varepsilon_i} =(X_i^0)R=R$.
\end{disc}

Lemma~\ref{Idecomp} not only provides a decomposition for the ideal $ I_{\ve{\alpha}, \ve{\varepsilon} } $, 
but also gives a first indication of how the ideals $ J_{i, \alpha_i, \varepsilon_i} $  behave under intersections. 
The next lemmas partially extend this. The essential point of the first lemma is the fact that a finite intersection of real intervals
of the form $[\alpha_i,\infty)$ and $(\alpha_j,\infty)$ is a real interval of the form $[\alpha,\infty)$ or $(\alpha,\infty)$.

\begin{lem}\label{lem110806b}
Let $i,b\in\bbz$ be given such that $1\leq i\leq d$ and $b\geq 0$.
Given $\alpha_{1},\ldots,\alpha_b\in\bbri$ and $\vep_1,\ldots,\vep_b\in\bbz_2$, there exist $\beta\in\bbri$ and $\delta\in\bbz_2$ 
such that $\bigcap_{t=1}^bJ_{i, \alpha_{t}, \varepsilon_{t}}=J_{i, \beta,\delta}$. Specifically, we have
$$\beta=\begin{cases} \max\{\alpha_{1},\ldots,\alpha_b\}&\text{if $b\geq 1$} \\ 0 &\text{if $b=0$.}\end{cases}$$
\end{lem}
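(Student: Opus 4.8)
The plan is to reduce the statement to the one-variable case and then to an elementary computation with real intervals. First I would dispose of the trivial case $b=0$: here the empty intersection is all of $R$, and since $J_{i,0,0} = (X_i^0)R = (1)R = R$ (using the convention $X_i^0 = 1$, i.e. Fact~\ref{prince}\eqref{prince1} with $\alpha = 0$), the formula $\beta = 0$ with, say, $\delta = 0$ works. So assume $b \geq 1$.

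Next, note that every ideal $J_{i,\alpha_t,\vep_t}$ involves pure powers of the \emph{single} variable $X_i$, so by Fact~\ref{monfact}\eqref{monfact3} it suffices to compare monomial sets, and in fact it suffices to track which pure powers $X_i^r$ lie in each ideal (a general monomial $\ve X^{\ve s}$ lies in $J_{i,\alpha_t,\vep_t}$ iff $s_i \geq_{\vep_t}\alpha_t$, by Fact~\ref{fact110804a}\eqref{fact110804a1}–\eqref{fact110804a2}). Thus the exponent set of $\bigcap_{t=1}^b J_{i,\alpha_t,\vep_t}$ is $\bigcap_{t=1}^b \{r \in \bbrg \mid r \geq_{\vep_t}\alpha_t\}$, an intersection of finitely many closed or open rays in $\bbrg$ (where the ``ray'' is empty when $\alpha_t = \infty$). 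The key step is the observation flagged in the remark before the lemma: a finite intersection of rays $[\alpha_t,\infty)$ and $(\alpha_t,\infty)$ is again a ray of the same two types, with left endpoint $\beta = \max_t\{\alpha_t\}$, and it is closed exactly when $\beta$ is attained by some $\alpha_t$ with $\vep_t = 0$ (and $\beta \neq \infty$). Concretely, set $\beta = \max\{\alpha_1,\ldots,\alpha_b\}$ and $\delta = 0$ if $\beta < \infty$ and $\beta = \alpha_t$ for some $t$ with $\vep_t = 0$, and $\delta = 1$ otherwise; one checks directly that $r \geq_{\vep_t}\alpha_t$ for all $t$ if and only if $r \geq_\delta \beta$, by splitting into the cases according to whether $\beta$ is attained with a closed or only with open constraints. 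This gives $\Monset{\bigcap_{t=1}^b J_{i,\alpha_t,\vep_t}}$, restricted to pure powers of $X_i$, equal to $\{X_i^r \mid r \geq_\delta\beta\}$; since both sides of the claimed equality are generated by their pure powers of $X_i$, Fact~\ref{monfact}\eqref{monfact3} (together with Fact~\ref{fact110804a}\eqref{fact110804a4}) finishes it. The case $\beta = \infty$ (all $\alpha_t = \infty$) is handled separately but trivially: then every $J_{i,\alpha_t,\vep_t} = 0$ by Fact~\ref{prince}\eqref{prince0}, the intersection is $0 = J_{i,\infty,\delta}$, and $\beta = \max\{\infty,\ldots,\infty\} = \infty$ as required.

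I do not expect a genuine obstacle here; the only point requiring a little care is the bookkeeping for $\delta$ — precisely, verifying that the equivalence $(\forall t)\, r\geq_{\vep_t}\alpha_t \iff r \geq_\delta\beta$ holds with the stated choice of $\delta$, including the boundary behavior at $r = \beta$. This is the ``hard'' part only in the sense of being the one place an off-by-one-in-$\vep$ slip could occur; it is otherwise a routine case analysis on whether the maximum $\beta$ is realized by a closed constraint. Everything else is an application of the monomial-set machinery (Facts~\ref{monfact} and~\ref{fact110804a}) already established.
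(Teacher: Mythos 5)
There is a genuine slip in your rule for $\delta$, precisely at the spot you flagged as error-prone. You set $\delta = 0$ whenever $\beta < \infty$ and $\beta$ is attained by \emph{some} $\alpha_t$ with $\vep_t = 0$. That is backwards: the intersection ray is closed only if \emph{every} index $t$ attaining the maximum has $\vep_t = 0$; equivalently, $\delta = 1$ as soon as \emph{some} $t$ with $\alpha_t = \beta$ has $\vep_t = 1$, and $\delta = 0$ otherwise. Concretely, take $b=2$, $\alpha_1 = \alpha_2 = 2$, $\vep_1 = 0$, $\vep_2 = 1$. Your rule gives $\delta = 0$, but $J_{i,2,0} \cap J_{i,2,1} = J_{i,2,1}$ since $J_{i,2,1}\subseteq J_{i,2,0}$ (the strict constraint wins at the boundary), so the correct value is $\delta = 1$. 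The fix is simply to flip the quantifier; after that the bookkeeping for the equivalence $(\forall t)\ r\geq_{\vep_t}\alpha_t \iff r\geq_\delta\beta$ does go through.

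Apart from that, your route is close in spirit to the paper's but organized differently. You compute the intersection explicitly as an intersection of rays in $\bbrg$ and then read off $\beta$ and $\delta$ from the resulting interval. The paper sidesteps the pointwise ray analysis: having set $\beta = \max\{\alpha_t\}$, it observes that one of the ideals $J_{i,\alpha_j,\vep_j}$ with $\alpha_j = \beta$ is already contained in all the others (one with $\vep_j = 1$ if any exists at the maximum, otherwise any with $\vep_j = 0$), so the whole intersection collapses to that single ideal. That version is a touch cleaner because the correct choice of $\delta$ falls out automatically from identifying the smallest ideal, rather than requiring a separate interval-endpoint case check where a quantifier slip such as yours can sneak in.
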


\begin{proof}
If $b=0$, then we have 
$$\bigcap_{t=1}^bJ_{i, \alpha_{t}, \varepsilon_{t}}=\bigcap_{t=1}^0J_{i, \alpha_{t}, \varepsilon_{t}}=R=J_{i, 0,0}$$
as claimed. Thus, we assume for the remainder of the proof that $b\geq 1$.
Since $\bigcap_{t=1}^bJ_{i, \alpha_{t}, \varepsilon_{t}}\subseteq J_{i, \alpha_{j}, \varepsilon_{j}}$ for each $j$,
it suffices to find an index $j$ such that $\alpha_j=\max\{\alpha_{1},\ldots,\alpha_b\}$ and
$J_{i, \alpha_{j}, \varepsilon_{j}}\subseteq J_{i, \alpha_{t}, \varepsilon_{t}}$ for all $t$.
If $\alpha_{j}=\infty$ for some $j$, then we have $J_{i, \alpha_{j}, \varepsilon_{j}} =0$, and we are done.
Thus, we assume for the remainder of the proof that $\alpha_{j}\neq\infty$ for all $j$.

Choose $k$  such that $\alpha_k=\max\{\alpha_{1},\ldots,\alpha_b\}$.
If there is an index $j$ such that $\alpha_j=\alpha_k$ and $\vep_j=1$, then
we have $J_{i, \alpha_{j}, \varepsilon_{j}}\subseteq J_{i, \alpha_{t}, \varepsilon_{t}}$ for all $t$
since $J_{i, \alpha_{j}, \varepsilon_{j}}$ is generated by monomials of the form $X_i^\alpha$ where 
$\alpha>\alpha_j\geq\alpha_t$.

So, we assume that for every index $j$ such that $\alpha_j=\alpha_k$ we have $\vep_j=0$.
In this case, we have $J_{i, \alpha_{k}, \varepsilon_{k}}\subseteq J_{i, \alpha_{t}, \varepsilon_{t}}$ for all $t$, as follows.
If $\alpha_k=\alpha_t$, then $\vep_t=0$, so we have 
$J_{i, \alpha_{k}, \varepsilon_{k}}=(X_i^{\alpha_k})R=(X_i^{\alpha_t})R= J_{i, \alpha_{t}, \varepsilon_{t}}$.
On the other hand, if
$\alpha_k\neq\alpha_t$, then $\alpha_k>\alpha_t$ and hence
$J_{i, \alpha_{k}, \varepsilon_{k}}=(X_i^{\alpha_k})R\subseteq J_{i, \alpha_{t}, \varepsilon_{t}}$.
\end{proof}

\begin{lemma}\label{intk}
Let $ k $ be a positive integer. 
For $t=1,\ldots,k$ let $i_t\in\{1,\ldots,d\}$ be given, and fix $\alpha_{t}\in\bbri$ and $\varepsilon_{t}\in\bbz_2$.
Then the intersection
$ \bigcap_{t=1}^k J_{i_t, \alpha_{t}, \varepsilon_{t}} $ is a monomial ideal of the form $ I_{ \ve{\beta}, \ve{\delta} } $ 
for some $ \ve{\beta} \in \mathbb{R}_{\geq 0}^d $ and $ \ve{\delta} \in \mathbb{Z}_2^d $.
\end{lemma}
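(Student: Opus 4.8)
The plan is to reduce Lemma~\ref{intk} to the two results it is built on: Lemma~\ref{lem110806b}, which collapses a finite intersection of pure-power ideals in a \emph{single} variable into one ideal $J_{i,\beta,\delta}$, and Lemma~\ref{Idecomp}, which reassembles one such ideal per variable into an ideal of the form $I_{\ve{\beta},\ve{\delta}}$. The only intermediate step is to regroup the given intersection according to which variable each factor involves.

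First I would partition the index set: for $i=1,\ldots,d$ set $T_i=\{\,t\in\{1,\ldots,k\}\mid i_t=i\,\}$, so that $\{1,\ldots,k\}$ is the disjoint union of $T_1,\ldots,T_d$. Since intersection of ideals is associative and commutative, this gives
\[
\bigcap_{t=1}^{k} J_{i_t,\alpha_{t},\varepsilon_{t}}
 \;=\; \bigcap_{i=1}^{d}\Bigl(\,\bigcap_{t\in T_i} J_{i,\alpha_{t},\varepsilon_{t}}\Bigr).
\]
Next I would apply Lemma~\ref{lem110806b} to each inner intersection, taking $b=|T_i|$ there---this includes $b=0$ when the variable $X_i$ does not occur among $i_1,\ldots,i_k$, in which case the lemma returns $J_{i,0,0}=R$. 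This produces $\beta_i\in\bbri$ and $\delta_i\in\bbz_2$ with $\bigcap_{t\in T_i} J_{i,\alpha_{t},\varepsilon_{t}}=J_{i,\beta_i,\delta_i}$. Setting $\ve{\beta}=(\beta_1,\ldots,\beta_d)$ and $\ve{\delta}=(\delta_1,\ldots,\delta_d)$, the display above becomes $\bigcap_{i=1}^{d} J_{i,\beta_i,\delta_i}$, which is exactly $I_{\ve{\beta},\ve{\delta}}$ by Lemma~\ref{Idecomp}. That completes the argument.

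The one point deserving an explicit word, rather than any real work, is the range of $\ve{\beta}$. A priori some $\beta_i=\max\{\alpha_t\mid t\in T_i\}$ could equal $\infty$, precisely when some $\alpha_t$ with $i_t=i$ equals $\infty$; in that case $J_{i,\beta_i,\delta_i}=0$ by Fact~\ref{prince}\eqref{prince0}, so the whole intersection is the zero ideal, which is $I_{\ve{\beta},\ve{\delta}}$ for this $\ve{\beta}\in\bbrid$ (cf.\ Example~\ref{ex110806a}). When every $\beta_i$ is finite---equivalently, when the intersection is nonzero---we get $\ve{\beta}\in\mathbb{R}_{\geq 0}^d$ as stated. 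So there is no genuine obstacle here: Lemma~\ref{intk} is a bookkeeping consequence of Lemmas~\ref{lem110806b} and~\ref{Idecomp}, with the only care required being to let Lemma~\ref{lem110806b} absorb the empty-group case and the case $\alpha_t=\infty$.
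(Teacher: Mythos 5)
Your proof is correct and follows essentially the same route as the paper's: regroup the intersection by variable, collapse each single-variable group using Lemma~\ref{lem110806b}, and reassemble via Lemma~\ref{Idecomp}; the paper achieves the regrouping by reordering the $i_t$'s and dispatches the $\alpha_t=\infty$ case up front rather than at the end, but these are cosmetic differences. You are also right to flag that the statement's $\ve{\beta}\in\mathbb{R}_{\geq 0}^d$ should really read $\ve{\beta}\in\bbrid$ to accommodate the zero ideal, a point the paper's own proof addresses implicitly by invoking Example~\ref{ex110806a}.
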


\begin{proof}
If $\alpha_{i_j}=\infty$ for some $j$, then we have $J_{i_j, \alpha_{j}, \varepsilon_{j}} =0$, so 
$ \bigcap_{t=1}^k J_{i_t, \alpha_{t}, \varepsilon_{t}} =0$,
and the desired equality follows from Example~\ref{ex110806a} in this case.
Thus, we assume for the remainder of the proof that $\alpha_{i_j}\neq\infty$ for all $j$.

Reorder the $i_t$'s if necessary to obtain the first equality in the next sequence where $0\leq b_1\leq b_2\leq\cdots\leq b_d \leq b_{d+1}=k+1$:
\begin{align*}
\bigcap_{t=1}^k J_{i_t, \alpha_{t}, \varepsilon_{t}} 
&= \bigcap_{i=1}^d \bigcap_{t=b_i}^{b_{i+1}-1} J_{i, \alpha_{t}, \varepsilon_{t}}
=\bigcap_{i=1}^dJ_{i, \beta_i, \delta_i}= I_{ \ve{\beta}, \ve{\delta} }. 
\end{align*}
The second step is from Lemma~\ref{lem110806b}, and the third step is from Lemma~\ref{Idecomp}.
\end{proof}

We demonstrate the algorithm from the proof of Lemma~\ref{intk} in the next example.

\begin{example}\label{ex1}
Let $ d = 2 $. We show how to write the ideal 
$$
I=J_{1,2,1} \cap J_{2, \frac{3}{2}, 0} \cap J_{1, \frac{5}{3}, 0} \cap J_{2, 1, 1} = \left[ J_{1, 2, 1} \cap J_{1, \frac{5}{3}, 0} \right] \cap \left[ J_{2, \frac{3}{2}, 0} \cap J_{2, 1, 1} \right] 
$$
in the form $ I_{\ve{\beta}, \ve{\delta}} $.
Define $ \ve{\beta} \in \mathbb{R}_{\geq 0}^2 $ and $ \ve{\delta} \in \mathbb{Z}_2^2 $ by $ \beta_1 = \text{max} \{ 2, \frac{5}{3} \} = 2 $, $ \delta_1 = 1 $,  $ \beta_2 = \text{max} \{ \frac{3}{2}, 1 \} = \frac{3}{2} $, and $ \delta_2 = 0 $. Then we have
$I= J_{1, \beta_1, \delta_1} \cap J_{2, \beta_2, \delta_2} 
 =  I_{\ve{\beta}, \ve{\delta}}$.
\end{example}

Lemmas~\ref{lem110806b} and~\ref{intk} show how to simplify an arbitrary intersection of  ideals of the form $ J_{i, \alpha_i, \varepsilon_i} $. 
The next lemmas are proved similarly and show how to simplify an arbitrary sum of these ideals.

\begin{lem}\label{lem110806c}
Let $i,b\in\bbz$ be given such that $1\leq i\leq d$ and $b\geq 0$.
Given $\alpha_{1},\ldots,\alpha_b\in\bbri$ and $\vep_1,\ldots,\vep_b\in\bbz_2$, there exist $\beta\in\bbri$ and $\delta\in\bbz_2$ 
such that $\sum_{t=1}^bJ_{i, \alpha_{t}, \varepsilon_{t}}=J_{i, \beta,\delta}$. Specifically, we have
$$\beta=\begin{cases} \min\{\alpha_{1},\ldots,\alpha_b\}&\text{if $b\geq 1$} \\ \infty &\text{if $b=0$.}\end{cases}$$
\end{lem}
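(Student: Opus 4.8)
The plan is to mimic the structure of the proof of Lemma~\ref{lem110806b}, replacing intersections with sums and maxima with minima throughout. The underlying fact being exploited is the dual of the one used there: a finite union of real intervals of the form $[\alpha_t,\infty)$ and $(\alpha_t,\infty)$ is again an interval of the form $[\beta,\infty)$ or $(\beta,\infty)$, with $\beta=\min_t\alpha_t$, and the issue of whether $\beta$ is attained (i.e.\ whether $\delta=0$ or $\delta=1$) is decided by looking at the indices achieving the minimum.

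First I would dispose of the degenerate case $b=0$: an empty sum of ideals is the zero ideal, and $J_{i,\infty,\delta}=0$ for any $\delta$ by Fact~\ref{prince}\eqref{prince0}, so taking $\beta=\infty$ (and, say, $\delta=0$) works. For the remainder assume $b\geq 1$. Since $J_{i,\alpha_t,\varepsilon_t}\subseteq\sum_{t=1}^b J_{i,\alpha_s,\varepsilon_s}$ for each $t$, it suffices to produce an index $j$ with $\alpha_j=\min\{\alpha_1,\ldots,\alpha_b\}$ such that $J_{i,\alpha_t,\varepsilon_t}\subseteq J_{i,\alpha_j,\varepsilon_j}$ for all $t$; then the sum equals $J_{i,\alpha_j,\varepsilon_j}$ and we set $\beta=\alpha_j$, $\delta=\varepsilon_j$. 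If some $\alpha_t=\infty$ we may harmlessly discard that summand since $J_{i,\infty,\varepsilon_t}=0$; if \emph{every} $\alpha_t=\infty$ then the sum is $0=J_{i,\infty,0}$ and we are done, so we may assume $\min\{\alpha_1,\ldots,\alpha_b\}<\infty$.

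Now let $m=\min\{\alpha_1,\ldots,\alpha_b\}<\infty$. Split into two cases according to the $\varepsilon$-values at the minimizing indices. If there is an index $j$ with $\alpha_j=m$ and $\varepsilon_j=0$, then $J_{i,\alpha_j,\varepsilon_j}=(X_i^m)R$ by Fact~\ref{prince}\eqref{prince1}, and for every $t$ the ideal $J_{i,\alpha_t,\varepsilon_t}$ is generated by monomials $X_i^r$ with $r\geq_{\varepsilon_t}\alpha_t\geq m$, so $X_i^r\in(X_i^m)R=J_{i,\alpha_j,\varepsilon_j}$ by Fact~\ref{fact110804a}\eqref{fact110804a1}; hence $J_{i,\alpha_t,\varepsilon_t}\subseteq J_{i,\alpha_j,\varepsilon_j}$, giving $\beta=m$ and $\delta=0$. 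Otherwise $\varepsilon_j=1$ for every index $j$ with $\alpha_j=m$; pick any such $j$, so $J_{i,\alpha_j,\varepsilon_j}=(X_i^r\mid r>m)R$. For each $t$: if $\alpha_t>m$ then $r\geq_{\varepsilon_t}\alpha_t$ forces $r>m$, while if $\alpha_t=m$ then by hypothesis $\varepsilon_t=1$ and again $r\geq_{\varepsilon_t}\alpha_t$ means $r>m$; either way every generator $X_i^r$ of $J_{i,\alpha_t,\varepsilon_t}$ lies in $J_{i,\alpha_j,\varepsilon_j}$, so $J_{i,\alpha_t,\varepsilon_t}\subseteq J_{i,\alpha_j,\varepsilon_j}$, giving $\beta=m$ and $\delta=1$. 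In both cases $\sum_{t=1}^b J_{i,\alpha_t,\varepsilon_t}=J_{i,\beta,\delta}$ with $\beta=\min\{\alpha_1,\ldots,\alpha_b\}$, as claimed.

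There is no real obstacle here; the only mild subtlety—and the one point I would be careful about—is the bookkeeping when the minimum $m$ is attained at several indices with mixed $\varepsilon$-values: a single index $j$ with $\alpha_j=m,\varepsilon_j=0$ already dominates everything (so $\delta=0$ wins), and only when \emph{all} minimizing indices have $\varepsilon=1$ do we get the open interval $\delta=1$. This is exactly parallel to, and slightly simpler than, the corresponding case analysis in Lemma~\ref{lem110806b}, so after handling it the argument writes itself. (Indeed the paper says ``The next lemmas are proved similarly,'' so presumably the authors either give this short argument or omit it.)
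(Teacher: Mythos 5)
Your proof is correct and is exactly what the paper has in mind: the paper omits the proof of this lemma, remarking only that it ``is proved similarly'' to Lemma~\ref{lem110806b}, and your argument is precisely that dualization (sums for intersections, $\min$ for $\max$, with the case analysis on the $\varepsilon$-values at the extremal indices flipped accordingly). The only blemish is a harmless typo where the summation index $t$ is reused both as the bound variable of the sum and as the fixed index of the summand being compared; otherwise the case handling, including the $\alpha_t=\infty$ degeneracies and the $b=0$ case, is complete and matches the intended argument.
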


\begin{lemma}\label{sumk}
Let $ k $ be a positive integer. 
For $t=1,\ldots,k$ let $i_t\in\{1,\ldots,d\}$ be given, and fix $\alpha_{i_t}\in\bbri$ and $\varepsilon_{t}\in\bbz_2$.
Then $ \sum_{t=1}^k J_{i_t, \alpha_{t}, \varepsilon_{t}} $ is a monomial ideal of the form $ J_{ \ve{\beta}, \ve{\delta} } $ for some 
$ \ve{\beta} \in \bbrid $ and $ \ve{\delta} \in \mathbb{Z}_2^d $.
\end{lemma}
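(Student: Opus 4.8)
The plan is to mimic the structure of the proof of Lemma~\ref{intk}, replacing intersections by sums and using the "sum" analogues of the earlier lemmas. First I would dispose of the degenerate cases: the ambient indices $i_t$ range over the finite set $\{1,\ldots,d\}$, so by the pigeonhole principle I can group the terms of the sum $\sum_{t=1}^k J_{i_t,\alpha_t,\varepsilon_t}$ according to the value of $i_t$. Concretely, after reindexing the $i_t$'s (exactly as in Lemma~\ref{intk}) I obtain integers $0 \leq b_1 \leq b_2 \leq \cdots \leq b_d \leq b_{d+1} = k+1$ so that
\[
\sum_{t=1}^k J_{i_t,\alpha_t,\varepsilon_t} = \sum_{i=1}^d \sum_{t=b_i}^{b_{i+1}-1} J_{i,\alpha_t,\varepsilon_t}.
\]
Note that, unlike the intersection case, there is no shortcut available when some $\alpha_{i_j}=\infty$: such a term simply contributes the zero ideal $J_{i_j,\infty,\varepsilon_j}=0$ to the sum (Fact~\ref{prince}\eqref{prince0}), which is harmless, so I do not need to treat $\alpha_t=\infty$ separately — Lemma~\ref{lem110806c} already allows $\alpha_t\in\bbri$.

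Next I would apply Lemma~\ref{lem110806c} to each inner sum $\sum_{t=b_i}^{b_{i+1}-1} J_{i,\alpha_t,\varepsilon_t}$ (a sum of at least $0$ ideals, all involving pure powers of the single variable $X_i$), producing $\beta_i \in \bbri$ and $\delta_i \in \bbz_2$ with $\sum_{t=b_i}^{b_{i+1}-1} J_{i,\alpha_t,\varepsilon_t} = J_{i,\beta_i,\delta_i}$; explicitly $\beta_i = \min\{\alpha_t : b_i \leq t \leq b_{i+1}-1\}$ when that index set is nonempty and $\beta_i = \infty$ when it is empty. Setting $\ve{\beta} = (\beta_1,\ldots,\beta_d) \in \bbrid$ and $\ve{\delta} = (\delta_1,\ldots,\delta_d) \in \bbz_2^d$, I then have
\[
\sum_{t=1}^k J_{i_t,\alpha_t,\varepsilon_t} = \sum_{i=1}^d J_{i,\beta_i,\delta_i} = J_{\ve{\beta},\ve{\delta}},
\]
where the last equality is precisely Fact~\ref{Jaedecomp}. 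This completes the proof.

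There is no serious obstacle here; the lemma is genuinely the "additive mirror" of Lemma~\ref{intk}, and every ingredient is already in place. The only point requiring a line of care is the reindexing/grouping step — making sure the $b_i$'s are defined so that the inner index ranges partition $\{1,\ldots,k\}$ correctly, including the case where some variable $X_i$ does not appear among the $i_t$ at all (giving an empty inner sum, handled by the $b=0$ clause of Lemma~\ref{lem110806c} which returns $J_{i,\infty,\delta_i}=0$, consistent with $X_i$ contributing nothing). Since the proof is so close to that of Lemma~\ref{intk}, it would also be reasonable to state it tersely, indicating only the two substitutions (Lemma~\ref{lem110806c} in place of Lemma~\ref{lem110806b}, and Fact~\ref{Jaedecomp} in place of Lemma~\ref{Idecomp}).
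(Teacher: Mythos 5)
Your proof is correct and follows exactly the route the paper intends: the paper states Lemma~\ref{sumk} without proof, noting only that it is ``proved similarly'' to Lemma~\ref{intk}, and your argument supplies that similar proof---group the summands by variable index, collapse each group via Lemma~\ref{lem110806c}, and reassemble with Fact~\ref{Jaedecomp} (the correct additive substitute for Lemma~\ref{Idecomp}). Your observation that the $\alpha_t=\infty$ case needs no separate shortcut, since a zero summand is harmless and Lemma~\ref{lem110806c} already accommodates it, is also accurate.
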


The next example is included to shed some light on Lemma~\ref{sumk}.

\begin{example}\label{ex2}
Let $ d = 2 $. We show how to write the ideal 
$$
I=J_{1,\frac{9}{8},1} + J_{2, \frac{11}{2}, 0} + J_{1, \frac{14}{3}, 0} + J_{2, 3, 1} = \left[ J_{1, \frac{9}{8}, 1} + J_{1, \frac{14}{3}, 0} \right] + \left[ J_{2, \frac{11}{2}, 0} + J_{2, 3, 1} \right] 
$$
in the form $J_{\ve{\beta}, \ve{\delta}}$.
Define $ \ve{\beta} \in \mathbb{R}_{\geq 0}^2 $ and $ \ve{\delta} \in \mathbb{Z}_2^2 $
by $ \beta_1 = \text{min} \{ \frac{9}{8}, \frac{14}{3} \} = \frac{9}{8} $, $ \delta_1 = 0 $, $ \beta_2 = \text{min} \{ \frac{11}{2}, 3 \} = 3$, and $ \delta_2 = 0 $.
Then we have
$I=  J_{1, \beta_1, \delta_1} + J_{2, \beta_2, \delta_2} = J_{\ve{\beta}, \ve{\delta}}$.
\end{example}

We are now in a position to prove the main result of this section, which contains Theorem~\ref{thm110803b} from the introduction.

\begin{theorem}\label{decomp}
A monomial ideal $I\subseteq R$ has a finite m-irreducible decomposition if and only if it can be expressed as a finite sum of ideals of the form $ I_{\ve{\alpha}, \ve{\varepsilon}} $.
\end{theorem}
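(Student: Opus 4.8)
The plan is to prove both implications by using the distributive law of Lemma~\ref{comm} together with the structural lemmas that convert sums and intersections of the one-variable ideals $J_{i,\alpha,\vep}$ into normal form. For the ``only if'' direction, suppose $I=\bigcap_{\lambda\in\Lambda}M_\lambda$ with $\Lambda$ finite and each $M_\lambda$ m-irreducible. By Theorem~\ref{eq}, each $M_\lambda$ has the form $J_{\ve\alpha^{(\lambda)},\ve\vep^{(\lambda)}}$, and by Fact~\ref{Jaedecomp} we may write $M_\lambda=\sum_{i=1}^d J_{i,\alpha^{(\lambda)}_i,\vep^{(\lambda)}_i}$. So $I$ is a finite intersection of finite sums of ideals of the form $J_{i,\alpha,\vep}$. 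Applying Lemma~\ref{comm}\eqref{comm1} distributes the intersection inside, rewriting $I$ as a finite sum of terms, each of which is a finite intersection $\bigcap_t J_{i_t,\alpha_t,\vep_t}$. By Lemma~\ref{intk}, each such intersection equals some $I_{\ve\beta,\ve\delta}$. Hence $I$ is a finite sum of ideals of the form $I_{\ve\alpha,\ve\vep}$, as desired.

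For the ``if'' direction, suppose $I=\sum_{s=1}^n I_{\ve\alpha^{(s)},\ve\vep^{(s)}}$. By Lemma~\ref{Idecomp}, each summand satisfies $I_{\ve\alpha^{(s)},\ve\vep^{(s)}}=\bigcap_{i=1}^d J_{i,\alpha^{(s)}_i,\vep^{(s)}_i}$, so $I$ is a finite sum of finite intersections of ideals of the form $J_{i,\alpha,\vep}$. Now apply Lemma~\ref{comm}\eqref{comm2} to push the sum inside the intersections: this rewrites $I$ as a finite intersection of terms, each of which is a finite sum $\sum_s J_{i_s,\alpha_s,\vep_s}$. By Lemma~\ref{sumk}, each such sum has the form $J_{\ve\beta,\ve\delta}$, which is m-irreducible by Theorem~\ref{eq}. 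Therefore $I$ is a finite intersection of m-irreducible ideals, i.e.\ it admits a finite m-irreducible decomposition.

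The two arguments are formally dual, each one a three-move sequence: (1) expand the given data using the appropriate one of Fact~\ref{Jaedecomp}/Lemma~\ref{Idecomp} to get a two-layer expression in the $J_{i,\alpha,\vep}$; (2) swap the two layers using the relevant half of Lemma~\ref{comm}; (3) collapse the inner layer back to normal form using Lemma~\ref{intk} or Lemma~\ref{sumk}. The main thing to be careful about is bookkeeping in step (2): after distributing, the index sets become products $\prod_s\{1,\dots,d\}$ (or similar), and one must check these are still finite and that the resulting inner expressions genuinely match the hypotheses of Lemma~\ref{intk} or Lemma~\ref{sumk} (in particular that each inner term is a finite sum/intersection of ideals each attached to a single variable index $i_t$). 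None of this is deep, but it is the only place where an error could creep in; the combinatorial shape of Lemma~\ref{comm} is exactly engineered to make it go through. I would also note explicitly at the end that the degenerate cases ($n=0$, giving $I=0=J_{\ve\infty,\ve\vep}$; or $\Lambda=\emptyset$, giving $I=R=I_{\ve 0,\ve 0}$) are covered, since Lemmas~\ref{lem110806b} and~\ref{lem110806c} were stated to include the empty sum and empty intersection.
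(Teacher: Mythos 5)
Your proof is correct and follows essentially the same route as the paper's: Theorem~\ref{eq} plus Fact~\ref{Jaedecomp}, then Lemma~\ref{comm}\eqref{comm1} and Lemma~\ref{intk} for one direction, and Lemma~\ref{Idecomp}, Lemma~\ref{comm}\eqref{comm2}, and Lemma~\ref{sumk} for the other. The only addition is your explicit mention of the degenerate cases and the bookkeeping caveat, which the paper leaves implicit.
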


\begin{proof}
$ \implies $: Assume that $ I $ has a finite m-irreducible decomposition. 
Theorem~\ref{eq} explains the first equality in the next sequence:
\begin{align*}
I 
&= \bigcap_{t=1}^k J_{\ve{\alpha}_t, \ve{\varepsilon}_t} \\
&= \bigcap_{t=1}^k \sum_{i_t=1}^d J_{i_t, \alpha_{t,i_t}, \varepsilon_{t,i_t}}\\
&= \sum_{i_1=1}^d \sum_{i_2=1}^d \cdots \sum_{i_k=1}^d \bigcap_{t=1}^k J_{i_t, \alpha_{t,i_t}, \varepsilon_{t,i_t}} \\
&= \sum_{i_1=1}^d \sum_{i_2}^d \cdots \sum_{i_k=1}^d I_{\ve{\beta}_i, \ve{\delta}_i}
\end{align*}
The remaining equalities are from Fact~\ref{Jaedecomp}, Lemma~\ref{comm}\eqref{comm1}, and Lemma~\ref{intk}.
Thus, the ideal $I$ is a sum of the desired form.

$ \impliedby $: Assume that $I$ is a finite sum of ideals of the form $ I_{\ve{\alpha}, \ve{\varepsilon}}$:
\begin{align*}
I 
&= \sum_{t=1}^k I_{\ve{\alpha}_t, \ve{\varepsilon}_t}\\
&= \sum_{t=1}^k \bigcap_{i_t=1}^d J_{i_t, \alpha_{t,i_t}, \varepsilon_{t,i_t}}\\
&= \bigcap_{i_1=1}^d \bigcap_{i_2=1}^d \cdots \bigcap_{i_k=1}^d \sum_{t=1}^k J_{i_t, \alpha_{t,i_t}, \varepsilon_{t,i_t}}\\
&= \bigcap_{i_1=1}^d \bigcap_{i_2=1}^d \cdots \bigcap_{i_k=1}^dJ_{\ve{\beta}_i, \ve{\delta}_i}.
\end{align*}
The second, third, and fourth steps in this sequence are from Lemmas~\ref{Idecomp}, \ref{comm}\eqref{comm2}, and~\ref{sumk},
respectively.
This expresses $ I $ as a finite intersection of ideals of the form $ J_{\ve{\beta}, \ve{\delta}} $,
so Theorem~\ref{eq} implies that $ I $ has a finite m-irreducible decomposition.
\end{proof}

The next example exhibits a monomial ideal in $R$ that does not admit a finite m-irreducible decomposition.
The discussion of the case $d=1$ in the introduction shows every monomial ideal in this case is m-irreducible.
Thus, our example must have $d\geq 2$.
The essential point for the example is that the graph of the line $y=1-x$ that defines the exponent vectors of the generators of $I$ is not 
a ``descending staircase'' which is the form required for an ideal to be a finite sum of 
ideals of the form $ I_{\ve{\alpha}, \ve{\varepsilon}} $. 
Note that any ideal defined by a similar curve (e.g., any curve of the form $y=f(x)$ where $f(x)$ is a
non-negative, strictly decreasing, continuous function on the non-negative interval $[a,b]$) will have the same property.

\begin{example}\label{ex110803a}
Set $d=2$.
We show that the  ideal $ I= \left( \{ X^r Y^{1-r} \mid 0 \leq r \leq 1 \} \right)R $ does not admit a finite m-irreducible decomposition.
By Theorem~\ref{decomp}, it suffices to show that $I$ cannot be written as a finite sum of 
ideals of the form $ I_{\ve{\alpha}, \ve{\varepsilon}} $.

Suppose by way of contradiction that $ I = \sum_{i=1}^k I_{\ve{\alpha}_i, \ve{\varepsilon}_i} $. 
If there are indices $i$ and $j$ such that $I_{\ve{\alpha}_i, \ve{\varepsilon}_i} \subseteq I_{\ve{\alpha}_j, \ve{\varepsilon}_j} $,
then we may remove $I_{\ve{\alpha}_j, \ve{\varepsilon}_j} $ from the list of ideals without changing the sum. 
Repeat this process for each pair of 
indices $i$, $j$ such that$I_{\ve{\alpha}_i, \ve{\varepsilon}_i} \subseteq I_{\ve{\alpha}_j, \ve{\varepsilon}_j} $
to reduce to the case where no such containments occur in the sum;
since the list of ideals is finite, this process terminates in  finitely many steps.

We claim that for each real number $r$ such that $0< r< 1$, there is an index $i$
such that
$I_{\ve{\alpha}_i, \ve{\varepsilon}_i}=(X^{r},Y^{1-r})R=I_{(r,1-r),(0,0)}$. 
(This will imply that the index set $\{1,\ldots,k\}$ cannot be finite, a contradiction.)
The monomial $X^rY^{1-r}$ is in 
$\monset{I}=\bigcup_{i=1}^k \Monset{I_{\ve{\alpha}_i, \ve{\varepsilon}_i} }$ by
Fact~\ref{fact110804a}\eqref{fact110804a3}. 
It follows that $X^{r}Y^{1-r}\in \Monset{I_{\ve{\alpha}_i, \ve{\varepsilon}_i} }$ for some $i$.
Since $X^{r'}Y^{1-r},X^{r}Y^{r''}\notin I$ for all $r',r''\in\bbrg$ such that
$r'<r$ and $r''<1-r$, it follows readily that $I_{\ve{\alpha}_i, \ve{\varepsilon}_i}=(X^{r},Y^{1-r})R=I_{(r,1-r),(0,0)}$. 
\end{example}

Our final result  shows that every monomial ideal of $R$
has a possibly infinite m-irreducible decomposition and
can be written as a possibly infinite sum
of ideals of the form $ I_{\ve{\alpha}, \ve{\varepsilon}} $. 
\footnote{Note that the case $S=\emptyset$ is covered by the convention that the empty sum of ideals is the zero ideal;
the case $\monset R\setminus\monset I=\emptyset$ is similarly covered since the empty intersection contains the empty product
which is the unit ideal.}

\begin{prop}\label{prop110806a}
Let $I$ be a monomial ideal with monomial generating set $S$. Then there are equalities
$$I=\sum_{\ve{X}^{\ve{r}}\in S}I_{\ve{r},\ve{0}}=\bigcap_{\ve{X}^{\ve{r}}\notin I}J_{\ve{r},\ve{1}}$$
where $\ve{0}=(0,\ldots,0)$ and $\ve{1}=(1,\ldots,1)$.
\end{prop}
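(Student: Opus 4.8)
The plan is to reduce both equalities to statements about monomial sets, exploiting Fact~\ref{monfact}\eqref{monfact3} together with the descriptions of the monomial sets of sums and intersections in Fact~\ref{fact110804a}\eqref{fact110804a3}--\eqref{fact110804a4}. The first equality is essentially bookkeeping: by Example~\ref{ex110806a} each ideal $I_{\ve{r},\ve{0}}$ equals $(\ve{X}^{\ve{r}})R$, so
$$\sum_{\ve{X}^{\ve{r}}\in S}I_{\ve{r},\ve{0}}=\sum_{\ve{X}^{\ve{r}}\in S}(\ve{X}^{\ve{r}})R=(S)R=I$$
by the standard identity $\sum_\lambda(S_\lambda)R=(\bigcup_\lambda S_\lambda)R$ recorded just before Lemma~\ref{int}. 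Nothing further is needed for this half.

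For the second equality, I would first observe that every monomial of $R$ has exponent vector in $\bbrd$ and that $0\in I$ always, so the index set in $\bigcap_{\ve{X}^{\ve{r}}\notin I}$ ranges over honest monomials with finite exponent vectors, and the ideals $J_{\ve{r},\ve{1}}$ are as in Notation~\ref{Jae}. By Fact~\ref{monfact}\eqref{monfact3} and Fact~\ref{fact110804a}\eqref{fact110804a4} it then suffices to prove $\monset{I}=\bigcap_{\ve{X}^{\ve{r}}\notin I}\monset{J_{\ve{r},\ve{1}}}$. For ``$\subseteq$'', take $\ve{X}^{\ve{t}}\in\monset{I}$ and any monomial $\ve{X}^{\ve{r}}\notin I$: since $\ve{X}^{\ve{t}}\in I$ and $\ve{X}^{\ve{r}}\notin I$, the monomial $\ve{X}^{\ve{r}}$ is not a multiple of $\ve{X}^{\ve{t}}$, so by Fact~\ref{fact110804a}\eqref{fact110804a1} some coordinate satisfies $t_i>r_i$; hence $X_i^{t_i}$ is a generator of $J_{\ve{r},\ve{1}}$ and $\ve{X}^{\ve{t}}$ is a multiple of it, so $\ve{X}^{\ve{t}}\in J_{\ve{r},\ve{1}}$. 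For ``$\supseteq$'', suppose $\ve{X}^{\ve{t}}$ lies in every $\monset{J_{\ve{r},\ve{1}}}$ but $\ve{X}^{\ve{t}}\notin I$; then $\ve{t}$ is itself one of the indices, so $\ve{X}^{\ve{t}}\in\monset{J_{\ve{t},\ve{1}}}$, i.e.\ $\ve{X}^{\ve{t}}$ is a multiple of some $X_i^{s}$ with $s>t_i$ by Fact~\ref{fact110804a}\eqref{fact110804a2}, forcing $t_i\geq s>t_i$ by Fact~\ref{fact110804a}\eqref{fact110804a1}, a contradiction.

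The degenerate cases are handled by the empty-sum and empty-intersection conventions flagged in the footnote: if $S=\emptyset$ then $I=0$, the sum is empty, and the intersection runs over all monomials, none of which lies in $J_{\ve{t},\ve{1}}$ for $\ve{r}=\ve{t}$, so the intersection is $0$; if $\monset{R}\setminus\monset{I}=\emptyset$ then $I=R$ and the intersection is empty, hence $R$. There is no serious obstacle here; the only point requiring a moment's care is the ``$\supseteq$'' direction, namely the observation that a monomial $\ve{X}^{\ve{t}}$ never belongs to ``its own'' ideal $J_{\ve{t},\ve{1}}$ because a monomial is never a multiple of a pure power whose exponent strictly exceeds the corresponding coordinate.
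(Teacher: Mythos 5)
Your proof is correct and follows essentially the same route as the paper: the first equality is the same computation via Example~\ref{ex110806a}, and the second is the same reduction to monomial sets via Facts~\ref{monfact}\eqref{monfact3} and~\ref{fact110804a}\eqref{fact110804a4}, with the ``$\subseteq$'' direction expressed contrapositively where the paper uses a direct contradiction and the ``$\supseteq$'' direction using the same self-indexing trick that $\ve{X}^{\ve{t}}\notin J_{\ve{t},\ve{1}}$. The degenerate cases you flag are also handled identically, via the paper's footnote conventions.
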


\begin{proof}
The first equality is straightforward, using Example~\ref{ex110806a}:
$$I=(S)R=\sum_{\ve{X}^{\ve{r}}\in S}(\ve{X}^{\ve{r}})R=\sum_{\ve{X}^{\ve{r}}\in S}I_{\ve{r},\ve{0}}.$$

Fact~\ref{fact110804a}\eqref{fact110804a4} implies that the ideal
$\bigcap_{\ve{X}^{\ve{r}}\notin I}J_{\ve{r},\ve{1}}$
is a monomial ideal of $R$, so Fact~\ref{monfact}\eqref{monfact3} implies that we need only show that
$\monset I=\bigcap_{\ve{X}^{\ve{r}}\notin I}\Monset{J_{\ve{r},\ve{1}}}$.

For the containment $\monset I\subseteq\bigcap_{\ve{X}^{\ve{r}}\notin I}\Monset{J_{\ve{r},\ve{1}}}$
let $\ve X^{\ve \alpha}\in\monset I$ and $\ve X^{\ve r}\notin I$; we need to show that $\ve X^{\ve \alpha}\in J_{\ve{r},\ve{1}}$,
that is, that $\alpha_i>r_i$ for some $i$.
Suppose by way of contradiction that $\alpha_i\leq r_i$ for all $i$.
Then $\ve X^{\ve r}\in(\ve X^{\ve \alpha})R\subseteq I$, a contradiction.

For the reverse containment $\monset I\supseteq\bigcap_{\ve{X}^{\ve{r}}\notin I}\Monset{J_{\ve{r},\ve{1}}}$,
we show that $\monset R\setminus\monset I\subseteq\monset R\setminus\bigcap_{\ve{X}^{\ve{r}}\notin I}\Monset{J_{\ve{r},\ve{1}}}$.
Let $\ve X^{\ve \beta}\in\monset R\setminus\monset I$. 
It follows that $\ve X^{\ve \beta}$ is in the index set for the intersection $\bigcap_{\ve{X}^{\ve{r}}\notin I}\Monset{J_{\ve{r},\ve{1}}}$.
Since $\beta_i\not>\beta_i$ for all $i$,
we have $\ve X^{\ve \beta}\notin J_{\ve\beta,\ve 1}$, so
$$\ve X^{\ve \beta}\in\monset R\setminus\Monset{J_{\ve\beta,\ve 1}}\subseteq\bigcup_{\ve{X}^{\ve{r}}\notin I}\left(\monset R\setminus\Monset{J_{\ve{r},\ve{1}}}\right)
=\monset R\setminus\bigcap_{\ve{X}^{\ve{r}}\notin I}\Monset{J_{\ve{r},\ve{1}}}
$$
as desired.
\end{proof}

\section*{Acknowledgments}
We are grateful to the anonymous referee for her/his thoughtful comments.


\begin{thebibliography}{10}

\bibitem{bruns:cmr}
W.\ Bruns and J.\ Herzog, \emph{Cohen-{M}acaulay rings}, revised ed., Studies
  in Advanced Mathematics, vol.~39, University Press, Cambridge, 1998.
  \MR{1251956 (95h:13020)}

\bibitem{Cocoa}
{CoCoA}Team, \emph{{{\hbox{\rm C\kern-.13em o\kern-.07em C\kern-.13em
  o\kern-.15em A}}}: a system for doing {C}omputations in {C}ommutative
  {A}lgebra}, Available at \/ {\tt http://cocoa.dima.unige.it}.

\bibitem{faridi:fisc}
S.~Faridi, \emph{The facet ideal of a simplicial complex}, Manuscripta Math.
  \textbf{109} (2002), no.~2, 159--174. \MR{1935027 (2003k:13027)}

\bibitem{faridi:stscm}
\bysame, \emph{Simplicial trees are sequentially {C}ohen-{M}acaulay}, J. Pure
  Appl. Algebra \textbf{190} (2004), no.~1-3, 121--136. \MR{2043324
  (2004m:13058)}

\bibitem{faridi:cmpsfmi}
\bysame, \emph{Cohen-{M}acaulay properties of square-free monomial ideals}, J.
  Combin. Theory Ser. A \textbf{109} (2005), no.~2, 299--329. \MR{2121028
  (2005j:13021)}

\bibitem{gao:cidmi}
S.~Gao and M.~Zhu, \emph{Computing irreducible decomposition of monomial
  ideals}, preprint (2008), \texttt{arXiv:0811.3425v1}.

\bibitem{M2}
D.~R. Grayson and M.~E. Stillman, \emph{Macaulay 2, a software system for
  research in algebraic geometry}, Available at
  \href{http://www.math.uiuc.edu/Macaulay2/}%
  {http://www.math.uiuc.edu/Macaulay2/}.

\bibitem{singular}
G.-M. Greuel, G.~Pfister, and H.~Sch\"{o}nemann, \emph{Symbolic computation and
  automated reasoning, the calculemus-2000 symposium}, ch.~{\sc Singular} 3.0
  --- A computer algebra system for polynomial computations, pp.~227--233, A.
  K. Peters, Ltd., Natick, MA, USA, 2001.

\bibitem{ha:rsfmifi}
H.~T. H{\`a} and A.~Van~Tuyl, \emph{Resolutions of square-free monomial ideals
  via facet ideals: a survey}, Algebra, geometry and their interactions,
  Contemp. Math., vol. 448, Amer. Math. Soc., Providence, RI, 2007,
  pp.~91--117. \MR{2389237 (2009b:13032)}

\bibitem{ha:mieihgbn}
\bysame, \emph{Monomial ideals, edge ideals of hypergraphs, and their graded
  {B}etti numbers}, J. Algebraic Combin. \textbf{27} (2008), no.~2, 215--245.
  \MR{2375493 (2009a:05145)}

\bibitem{heinzer:pdmiii}
W.~Heinzer, A.~Mirbagheri, L.~J. Ratliff, Jr., and K.~Shah, \emph{Parametric
  decomposition of monomial ideals. {II}}, J. Algebra \textbf{187} (1997),
  no.~1, 120--149. \MR{1425562 (97k:13001)}

\bibitem{heinzer:pdmii}
W.~Heinzer, L.~J. Ratliff, Jr., and K.~Shah, \emph{Parametric decomposition of
  monomial ideals. {I}}, Houston J. Math. \textbf{21} (1995), no.~1, 29--52.
  \MR{1331242 (96c:13002)}

\bibitem{herzog:mi}
J.~Herzog and T.~Hibi, \emph{Monomial ideals}, Graduate Texts in Mathematics,
  vol. 260, Springer-Verlag London Ltd., London, 2011. \MR{2724673}

\bibitem{hibi:accp}
T.~Hibi, \emph{Algebraic combinatorics on convex polytopes}, Carslaw
  Publications, 1992.

\bibitem{hochster:cmrcsc}
M.~Hochster, \emph{Cohen-{M}acaulay rings, combinatorics, and simplicial
  complexes}, Ring theory, {II} ({P}roc. {S}econd {C}onf., {U}niv. {O}klahoma,
  {N}orman, {O}kla., 1975), Dekker, New York, 1977, pp.~171--223. Lecture Notes
  in Pure and Appl. Math., Vol. 26. \MR{0441987 (56 \#376)}

\bibitem{liu:apdmi}
J.-C. Liu, \emph{Algorithms on parametric decomposition of monomial ideals},
  Comm. Algebra \textbf{30} (2002), no.~7, 3435--3456. \MR{1915006
  (2003e:13031)}

\bibitem{miller:adfldms}
E.~Miller, \emph{The {A}lexander duality functors and local duality with
  monomial support}, J. Algebra \textbf{231} (2000), no.~1, 180--234.
  \MR{1779598 (2001k:13028)}

\bibitem{miller:admir}
E.\ Miller, \emph{Alexander duality for monomial ideals and their resolutions},
  Rejecta Mathematica \textbf{1} (2009), no.~1, 18--57.

\bibitem{miller:tcmcmi}
E.~Miller, \emph{Topological {C}ohen-{M}acaulay criteria for monomial ideals},
  Combinatorial aspects of commutative algebra, Contemp. Math., vol. 502, Amer.
  Math. Soc., Providence, RI, 2009, pp.~137--155. \MR{2583278}

\bibitem{miller:cca}
E.~Miller and B.~Sturmfels, \emph{Combinatorial commutative algebra}, Graduate
  Texts in Mathematics, vol. 227, Springer-Verlag, New York, 2005. \MR{2110098
  (2006d:13001)}

\bibitem{morey:eiacp}
S.~Morey and R.~Villarreal, \emph{Edge ideals: algebraic and combinatorial
  properties}, preprint (2010) \texttt{arXiv:1012.5329v3}.

\bibitem{reisner:cmqpr}
G.~A. Reisner, \emph{Cohen-{M}acaulay quotients of polynomial rings}, Advances
  in Math. \textbf{21} (1976), no.~1, 30--49. \MR{0407036 (53 \#10819)}

\bibitem{rogers:mid}
M.~Rogers and S.~Sather-Wagstaff, \emph{Monomial ideals and their
  decompositions}, draft (2011),
  \texttt{http://www.ndsu.edu/pubweb/\~{}ssatherw/DOCS/monomial.pdf}.

\bibitem{roune:saidmi}
B.~H. Roune, \emph{The slice algorithm for irreducible decomposition of
  monomial ideals}, J. Symbolic Comput. \textbf{44} (2009), no.~4, 358--381.
  \MR{2494980 (2009m:13031)}

\bibitem{stanley:ubccmr}
R.~P. Stanley, \emph{The upper bound conjecture and {C}ohen-{M}acaulay rings},
  Studies in Appl. Math. \textbf{54} (1975), no.~2, 135--142. \MR{0458437 (56
  \#16640)}

\bibitem{stanley:cca}
\bysame, \emph{Combinatorics and commutative algebra}, second ed., Progress in
  Mathematics, vol.~41, Birkh\"auser Boston Inc., Boston, MA, 1996. \MR{1453579
  (98h:05001)}

\bibitem{villarreal:cmg}
R.~H. Villarreal, \emph{Cohen-{M}acaulay graphs}, Manuscripta Math. \textbf{66}
  (1990), no.~3, 277--293. \MR{1031197 (91b:13031)}

\bibitem{villarreal:ma}
\bysame, \emph{Monomial algebras}, Monographs and Textbooks in Pure and Applied
  Mathematics, vol. 238, Marcel Dekker Inc., New York, 2001. \MR{1800904
  (2002c:13001)}

\end{thebibliography}
\providecommand{\bysame}{\leavevmode\hbox to3em{\hrulefill}\thinspace}
\providecommand{\MR}{\relax\ifhmode\unskip\space\fi MR }
\providecommand{\MRhref}[2]{%
  \href{http://www.ams.org/mathscinet-getitem?mr=#1}{#2}
}
\providecommand{\href}[2]{#2}

\end{document}